\newtheorem{theorem}[equation]{Theorem}
\newtheorem{claim}[equation]{Claim}
\newtheorem{lemma}[equation]{Lemma}
\newtheorem*{hyp*}{\sc Hypothesis $(**)$}
\newcommand{\Q}{{\mathbf{Q}}}
\DeclareMathOperator{\V}{V}
\DeclareMathOperator{\Gal}{Gal}
\DeclareMathOperator{\tensor}\otimes{}
\DeclareMathOperator{\C}{C}
\DeclareMathOperator{\N}{Norm}
\DeclareMathOperator{\T}{Trace}
\DeclareMathOperator{\X}{NormOneGroup}
\newcommand{\QQ}{{\mathbf{Q}}}
\newtheorem{cor}[equation]{Corollary}
\newtheorem{prop}[equation]{Proposition}
\theoremstyle{definition}
\def\bv{\setbox0\vbox\bgroup\hsize4.18in}
\def\ev{\egroup\hskip1.in\box0}
\title{Subgroups of Division Rings}
\author{Mark Lewis \and Murray Schacher}
\begin{document}
\maketitle
\begin{abstract} \label{abs}
 We investigate the finite subgroups that occur
in the Hamiltonian quaternion algebra over the real subfield of cyclotomic
fields.  When possible, we investigate their distribution among the
maximal orders.

\end{abstract}


MSC(2010): Primary: 16A39, 12E15; Secondary: 16U60

\par

\section{Introduction} \label{sec:intro}

Let $F$ be a field, and fix $a$ and $b$ to be non-$0$ elements of $F$.
The {\it symbol algebra} $A = (a,b)$ is the $4$-dimensional algebra over
$F$ generated by elements $i$ and $j$ that satisfy the relations:

\begin{equation}\label{stdform}
 i^2 = a, \ \ j^2 = b, \ \ ij = -ji.
\end{equation}

One usually sets $k = ij$, which leads to the additional circular relations
\begin{equation}
 ij = k = -ji, \ \ \ jk = i = -kj, \ \ \ ki = j = -ki.
\end{equation}
The set $\{1,i,j,k\}$ forms a basis for $A$ over $F$.  It is not difficult to see that
$A$ is a central simple algebra over $F$, and using Wedderburn's theorem, we see that $A$ is either a $4$-dimensional division ring or the ring $M_2(F)$ of $2\times 2$ matrices over $F$.  It is
known that $A$ is split (i.e. is the ring of $2 \times 2$ matrices over $F$) if and only if
$b$ is a norm from the field $F(\sqrt a)$.  Note that $b$ is a {\it norm} over $F(\sqrt a)$ if and only if there exist elements $x,y \in F$ so that $b = x^2 - y^2 a$.  This condition is symmetric
in $a$ and $b$.

More generally, we have the following isomorphism of algebras:
\begin{equation}\label{norm}
(a,b) \cong (a,ub)
\end{equation}
\noindent where $u = x^2 - y^2a$ is a norm from $F(\sqrt a)$; see \cite{pierce} or \cite{serre}.  We will usually be concerned with the case $a = b = -1$, where $F$ is the real subfield of a cyclotomic number field.   In this case, it is not difficult to see that $-1$ will not be a norm in $F (\sqrt {-1})$, so this ensures that $A = (-1,-1)$ is a division ring (see \cite{serre} or \cite{artate}).   We note that(\ref{norm}) says there are alternative descriptions of $A$.  When exact calculations are required, the form of the isomorphism in (\ref{norm}) will be an issue.

When $F$ is the rational field $\QQ$, the algebra $A = (-1,-1)$ is often called the
{\it ordinary Hamiltonion quaternion algebra}.  Motivated by this, we will say that $A$ is the {\it quaternion algebra} over $K$ when $K$ is any field.  When $K$ is clear, we will just say that $A$ is a quaternion algebra.
A symbol algebra $A = (a,b)$ has an {\sl involution}, $u \rightarrow \bar u$, where
if $$u = u[0]\cdot 1 + u[1]\cdot i + u[2] \cdot j + u[3] \cdot k, $$ then
\begin{equation}
\bar u = u[0]\cdot 1 - u[1]\cdot i - u[2] \cdot j - u[3] \cdot k.
\end{equation}

The usual rules apply:
\begin{equation}\label{defs}
\bar{\bar u} = u, \ \ \T(u) = u + \bar u \in F, \ \
\N(u) = u \bar u \in F
\end{equation}
\noindent and $\T$ is linear over $F$.  Every element of $A$ satisfies
its characteristic polynomial
\begin{equation}\label{charpoly}
x^2 - \T(u)\cdot  x + N(u)\cdot 1.
\end{equation}

When $A = (-1,-1)$, the function $\N$ is a sum of four squares, and so, it represents a positive definite quadratic form when $F$ is real.  Symbol algebras are a special case of {\sl cyclic algebras}, which we
briefly review next.

\section{Cyclic Algebras} \label{sec:cyc}

Let $K$ be a field, and let $L/K$ be a cyclic field extension of dimension $n$.
Writing $\sigma$ for a generator for $\Gal(L/K)$, we see that $\sigma$ has order $n$.
Let $a \in K^*$ be a non-$0$ element of $K$.  The cyclic algebra
$A = (L/K, \sigma, a)$ is defined to be
$$
A = L\cdot 1 + L \cdot x + L \cdot x^2 + \dots + L \cdot x^{n-1}
$$
as a left vector space over $L$ with the relations:

\begin{itemize}
\item $x^n = a$,
\item $x\alpha = \sigma(\alpha)x$ for all $\alpha \in L$.
\end{itemize}

Thus, $(L/K,\sigma,a)$ is a central simple algebra over $K$ of dimension
$n^2$.  It is the $n \times n$ matrix ring $M_n(K)$ if and only
if $a$ is a norm from $L$ to $K$.  More generally, $(L/K,\sigma,a)
\cong (L/K,\sigma,b)$ if and only if $a\cdot b^{-1}$ is a norm
from $L$ to $K$.  The connection with the previous section is the following; see \cite{pierce}
or \cite{serre} for proofs:

\begin{claim}\label{cyclic} \
Let $K,L$ be fields of characteristic $0$, and suppose $L = K(\sqrt a)$, for some element $a \in K$.
Let $\sigma : \sqrt a \rightarrow -\sqrt a$ be the generating automorphism
of $L/K$.  Then $(L/K,\sigma,b) \cong (a,b)$ for some nonzero element $b \in K$. 
\end{claim}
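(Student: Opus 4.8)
The plan is to produce an explicit generating pair inside the cyclic algebra $(L/K,\sigma,b)$ that satisfies the defining relations of the symbol algebra $(a,b)$, and then to upgrade the resulting map to an isomorphism by a dimension count. Write $A' = (L/K,\sigma,b) = L\cdot 1 \oplus L\cdot x$, where $x^2 = b$ and $x\alpha = \sigma(\alpha)x$ for all $\alpha \in L$; since $[L:K] = 2$, the algebra $A'$ has dimension $4$ over $K$, with $K$-basis $\{1,\sqrt a,\, x,\, \sqrt a\, x\}$. Inside $A'$ put $i = \sqrt a$ and $j = x$. Then $i^2 = a$ and $j^2 = b$ hold by construction, while
\[
ji = x\sqrt a = \sigma(\sqrt a)\,x = -\sqrt a\, x = -ij ,
\]
since $\sigma(\sqrt a) = -\sqrt a$. (Characteristic $0$ is what makes $-\sqrt a \neq \sqrt a$, so that $\sigma$ really is the nontrivial automorphism of $L/K$ and $a$ is a non-square in $K$; this is implicit in the hypothesis that $L = K(\sqrt a)$ is cyclic with generating automorphism $\sigma$.) Thus $i,j \in A'$ satisfy $i^2 = a$, $j^2 = b$, $ij = -ji$.

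Next I would use these relations to define a $K$-algebra homomorphism $\phi\colon (a,b) \to A'$. On the basis $\{1,i,j,k\}$ of $(a,b)$, set $\phi(1) = 1$, $\phi(i) = \sqrt a$, $\phi(j) = x$, $\phi(k) = \sqrt a\, x$, and extend $K$-linearly; multiplicativity of $\phi$ reduces to checking it against the circular multiplication table of $(a,b)$, which in turn follows from the three relations displayed above. (Equivalently, one recognizes $(a,b)$ as the $K$-algebra presented by generators $i,j$ subject to \eqref{stdform}, and reads the homomorphism off from the fact that $\sqrt a$ and $x$ satisfy those relations in $A'$.) This is the only step that calls for any verification at all.

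Finally, observe that $\im\phi$ contains $1$, $\sqrt a$, $x$, and $\phi(k) = \phi(i)\phi(j) = \sqrt a\, x$, i.e.\ it contains the whole $K$-basis $\{1,\sqrt a,\, x,\, \sqrt a\, x\}$ of $A'$, so $\phi$ is surjective. Since $\dim_K (a,b) = 4 = \dim_K A'$, a surjective $K$-linear map between them is bijective; hence $\phi$ is an isomorphism of $K$-algebras $(a,b) \cong (L/K,\sigma,b)$, taking $b$ to be the given nonzero element of $K$. (Alternatively, injectivity is automatic because $(a,b)$ is simple, being central simple over $K$, and $\ker\phi$ is a proper two-sided ideal.) There is no substantive obstacle here; the content is just the bookkeeping of matching the presentation of $(a,b)$ to the internal structure of the cyclic algebra.
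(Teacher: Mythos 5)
Your proof is correct: taking $i=\sqrt a$ and $j=x$ inside $(L/K,\sigma,b)$, checking the relations $i^2=a$, $j^2=b$, $ij=-ji$, and concluding by surjectivity plus the dimension count (or simplicity of $(a,b)$) is exactly the standard argument. The paper itself gives no proof of this claim, deferring to Pierce and Serre, and your argument is the one found there; no gaps.
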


We note that the algebra $(L/K,\sigma,b)$ in Claim \ref{cyclic} will be split if and only if $b$ is a norm in $K$. All our fields will be number fields, and so Proposition \ref{cyclic} will
apply.  A quadratic splitting field $L$ of the quaternion algebra $A = (a,b)$ over
$K$ is a quadratic field extension $L = K(\sqrt u)$ for some element $u \in K$ so that $A \tensor_K L$ is $2 \times 2$ matrices over $L$. Claim \ref{cyclic} has a converse, which we will use often; again see \cite{pierce} or \cite{serre} for proofs:

\begin{claim}\label{split} \
Let $A$ be a quaternion algebra over $K$, and let $L$ a quadratic splitting field of $A$.
Then $L$ is isomorphic to a subfield of $A$, and $A \cong (L/K,\sigma,c)$ for
some element $c \in K$.
\end{claim}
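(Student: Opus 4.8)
The plan is to prove Claim~\ref{split} in two stages: first exhibit $L$ as a subfield of $A$, and then use this embedding together with Claim~\ref{cyclic} to produce the cyclic presentation $A \cong (L/K,\sigma,c)$.

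For the first stage, write $L = K(\sqrt u)$. Since $L$ is a splitting field of $A$ but $L \neq K$ (as $A$ is a division ring, it is not split over $K$ itself unless $A \cong M_2(K)$, in which case the statement is an easy separate check), the degree $[L:K] = 2$ equals the degree of $A$, i.e.\ $\sqrt{\D_K A}$. The standard fact I would invoke here is that a field extension $L/K$ splits a central simple algebra $A$ and has $[L:K] = \sqrt{\D_K A}$ if and only if $L$ embeds into $A$ as a maximal subfield; see \cite{pierce} or \cite{serre}. Concretely, one can produce the embedding by hand: choose a nonzero $y \in A$ with $\T(y) = 0$ and $\N(y) = -u$ — such $y$ exists because the ternary norm form on the trace-zero elements of $A$ represents $-u$ exactly when $A \tensor_K K(\sqrt u)$ splits, and then $y^2 = -\N(y) = u$ (using the characteristic polynomial \eqref{charpoly} with $\T(y) = 0$), so $K[y] \cong K(\sqrt u) = L$.

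For the second stage, let $\sigma$ be the nontrivial element of $\Gal(L/K)$. Inside $A$ we now have the trace-zero element $y$ with $y^2 = u$; by the Noether--Skolem theorem the automorphism $\sigma$ of $L \subset A$ is realized by conjugation by some invertible $z \in A$, i.e.\ $z \alpha z^{-1} = \sigma(\alpha)$ for all $\alpha \in L$. In the quaternion setting one can again be explicit: take $z$ to be (a scalar multiple of) the element $i$ or $j$ in a symbol presentation, anything anticommuting with $y$. Then $z^2$ centralizes $L$, hence lies in $L$; but $z^2$ also commutes with $z$, so in fact $z^2 \in K$ (since the centralizer of $L$ in $A$ is $L$ itself, and $z^2$ is fixed by $\sigma$-conjugation). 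Setting $c = z^2 \in K^*$, the elements $\{1, z\}$ form an $L$-basis of $A$ with $z^2 = c$ and $z\alpha = \sigma(\alpha)z$, which is exactly the defining data of the cyclic algebra $(L/K,\sigma,c)$. That $A$ has $L$-dimension $2$ follows from $\D_K A = 4$ and $[L:K]=2$, so the map is an isomorphism.

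The main obstacle is the first stage: showing $L$ actually embeds in $A$, which really rests on the correspondence between splitting fields and subfields (equivalently, on the fact that the trace-zero norm form represents $-u$ precisely when $K(\sqrt u)$ splits $A$). Once $L \hookrightarrow A$ is in hand, the passage to the cyclic presentation is a routine application of Noether--Skolem plus a dimension count, with the only mild care needed being the verification that the conjugating element's square lands in $K$ rather than merely in $L$. I would cite \cite{pierce} or \cite{serre} for the splitting-field/subfield dictionary and for Noether--Skolem, and keep the quaternion-specific computations (the characteristic polynomial identity $y^2 = u$, the anticommutation) explicit since the paper will need that level of concreteness later.
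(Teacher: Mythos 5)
The paper gives no proof of Claim~\ref{split} at all --- it simply defers to \cite{pierce} and \cite{serre} --- so there is nothing to diverge from; your argument is the standard textbook proof and it is correct: embed $L=K(\sqrt u)$ via a trace-zero element $y$ with $\N(y)=-u$ (so $y^2=u$ by the characteristic polynomial), then apply Skolem--Noether to get $z$ realizing $\sigma$, and observe $z^2\in C_A(L)=L$ and $z^2$ is $\sigma$-fixed, hence $z^2=c\in K^*$, with $\{1,z\}$ an $L$-basis by dimension count. The only soft spot, which you correctly flag yourself, is that the first stage invokes the splitting-field/maximal-subfield dictionary (equivalently, that the pure norm form represents $u$ exactly when $K(\sqrt u)$ splits $A$), which is essentially the nontrivial content being cited from \cite{pierce} and \cite{serre}; that is the same level of rigor the paper itself adopts.
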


When necessary, we will have to make the isomorphism in Claim \ref{split} explicit.

\section{Finite Subgroups} \label{sec:finite}

From now on, we assume $A = (-1,-1)$ over a field $K$ which is the
real subfield of a cyclotomic field.  By \cite{artate}, $A$ is a
$4$-dimensional division ring central over $K$.  We are interested in the
finite groups $G$ which are subgroups of the multiplicative group of $A$.

Many of our conclusions are dependent on the computer program Magma,
and its QuaternionAlgebra package.  Thus, for convenience, we often
illustrate calculations in Magma format.  However, we do not report version
numbers, and so the reader may have to make adjustments in verifying our
calculations.  Our use of Magma format is a notational convenience.

\par
The subgroups of $A$ are a special case of the results from the monumental paper
of {\hbox {Amitsur \cite{amitsur}}, in which all finite subgroups of division rings in characteristic
zero are characterized.  Suppose $G$ is a finite group contained in $D^*$, the multiplicative group
of a division ring $D$ of characteristic $0$.  We emphasize that $D$ need not be
finite dimensional over $\Q$.  Let $\V (G)$ be the set of $\Q$ linear combinations
of the elements $g \in G$.  Observe that $\V (G)$ is a subring of $D$.  Note that $\V (G)$ is not isomorphic to the group algebra $\Q [G]$ since $\Q [G]$ has zero divisors and $\V [G]$ does not.

Now $G$ acts without fixed points (by left translation) on the division ring $D$, and so $G$ is a Frobenius complement -- a condition imposing strong restrictions.  Recall that a group $H$ is a Frobenius complement if $H$ acts on a group $K$ and satisfies the condition that $C_K (h) = 1$ for all $h \in H \setminus \{ 1 \}$.  Many of the results regarding Frobenius complements can be found in Section V.8 of \cite{hup}.  One of the main theorems of \cite{amitsur} is

\begin{prop}[Amitsur]\label{am}
Let $G$ be a finite subgroup of the the division ring $D$.
\begin{enumerate}
\item $\V (G)$ is independent of $D$, and is a finite dimensional division algebra
over $\Q$.  Its center is an abelian extension of $\Q$.

\item Every Sylow subgroup of $G$ is cyclic or generalized quaternion.
\end{enumerate}
\end{prop}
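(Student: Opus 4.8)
The plan is to deduce both assertions from the fixed-point-free action of $G$ on $D$ by left translation, reducing everything to the theory of Frobenius complements combined with the structure theory of semisimple $\Q$-algebras. First I would set up the key observation: since $G \subseteq D^*$ acts without fixed points on the additive group of $D$, any nontrivial element $g \in G$ has no eigenvalue $1$ acting on $D$, hence $g - 1$ is invertible in $D$; this is precisely the Frobenius-complement condition and it immediately forces each Sylow subgroup to be a $p$-group that is itself a Frobenius complement. For part (2), I would invoke the classical theorem (see Section V.8 of \cite{hup}) that a finite $p$-group which is a Frobenius complement has a unique subgroup of order $p$, and a $p$-group with a unique subgroup of order $p$ is cyclic when $p$ is odd and cyclic or generalized quaternion when $p = 2$. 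This gives assertion (2) with essentially no computation, provided one is willing to cite the $p$-group classification.

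For part (1), I would argue that $\V(G)$, the $\Q$-span of $G$ inside $D$, is a subring of $D$ with no zero divisors (it sits inside a division ring), and it is finite dimensional over $\Q$ because $G$ is finite — it is spanned by the finitely many elements of $G$. A finite-dimensional $\Q$-algebra without zero divisors is a division algebra, so $\V(G)$ is a finite-dimensional division algebra over $\Q$. Independence of $D$ should follow from the fact that $\V(G)$ is a quotient of the group algebra $\Q[G]$: the surjection $\Q[G] \twoheadrightarrow \V(G)$ sending $\sum a_g g \mapsto \sum a_g g$ has a kernel that is a two-sided ideal, and by Wedderburn–Artin $\Q[G] \cong \prod_i M_{n_i}(\Delta_i)$ for division algebras $\Delta_i$; a quotient with no zero divisors must be a single one of these simple factors, and in fact $n_i = 1$ (a matrix ring of size $>1$ has zero divisors), so $\V(G) \cong \Delta_i$ for one of the intrinsic Wedderburn components $\Delta_i$ of $\Q[G]$. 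Since the $\Delta_i$ depend only on $G$, so does $\V(G)$. Finally, the center of $\V(G)$ is a subfield of the center of the corresponding factor of $\Q[G]$; by a standard character-theoretic computation the centers of the Wedderburn components of $\Q[G]$ are the fixed fields $\Q(\chi)$ of Galois on the character values, which are subfields of cyclotomic fields and hence abelian over $\Q$.

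The main obstacle, and the step where I would expect to spend the most care, is pinning down exactly why the quotient map $\Q[G] \to \V(G)$ selects a \emph{single} Wedderburn factor on which it restricts to an \emph{isomorphism} — one must rule out the possibility that $\V(G)$ is a proper quotient of some $M_{n_i}(\Delta_i)$ with $n_i > 1$, which follows from the simplicity of $M_{n_i}(\Delta_i)$ (a simple ring has no nonzero proper quotients), but one should check that $\V(G)$ being a homomorphic image really does land inside a product of the $M_{n_i}(\Delta_i)$ rather than only being an abstract quotient ring. Once that is nailed down, identifying the center with an abelian extension is a matter of quoting the description of $Z(M_{n_i}(\Delta_i))$ in terms of Galois orbits of irreducible complex characters of $G$, which is standard. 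I would present the Frobenius-complement/Sylow argument first since it is cleaner, then the algebra-structure argument for part (1).
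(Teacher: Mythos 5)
The paper does not actually prove Proposition \ref{am}; it is quoted from Amitsur's paper \cite{amitsur}, with only the remark that item (2) follows from $G$ being a Frobenius complement. So your proposal is being measured against the literature rather than against an argument in the text. Most of your sketch is sound: the Wedderburn decomposition $\Q[G]\cong \prod_i M_{n_i}(\Delta_i)$, the observation that a two-sided-ideal quotient of this product is a sub-product, and the conclusion that a quotient without zero divisors must be a single factor with $n_i=1$, are all correct and do show that $\V(G)$ is a finite-dimensional division algebra isomorphic to one of the components $\Delta_i$, with center a character field $\Q(\chi)$ and hence abelian over $\Q$. (Your worry about whether the quotient ``lands inside'' the product is not the real issue; any quotient by a two-sided ideal of a finite product of simple rings is automatically a sub-product.)

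The genuine gap is in the independence claim of part (1). Your argument shows that each embedding $G\hookrightarrow D^*$ selects \emph{some} component $\Delta_i$ of $\Q[G]$ that is a division algebra and on which $G$ acts faithfully; the sentence ``since the $\Delta_i$ depend only on $G$, so does $\V(G)$'' does not rule out two different embeddings selecting two non-isomorphic such components. That all faithful division-algebra components coincide (equivalently, that there is a unique faithful representation of $G$ into a division ring, which is exactly how the paper glosses item (1)) is the substantive content of Amitsur's independence statement, and in \cite{amitsur} it rests on the classification of the groups involved rather than on general semisimple algebra. A secondary, fixable point: to invoke the Frobenius-complement literature for part (2) you need $G$ to act fixed-point-freely on a \emph{finite} group, whereas $(D,+)$ is infinite; either reduce a $G$-stable full lattice in $\V(G)$ modulo a prime not dividing $\prod_{g\ne 1}\N(g-1)$, or bypass Frobenius complements entirely by noting that every abelian subgroup of $D^*$ generates a commutative subfield and is therefore cyclic, and a finite $p$-group all of whose abelian subgroups are cyclic is cyclic or generalized quaternion.
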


Item (1) of Proposition \ref{am} is a statement not about $G$ but its representation, i.e.
there is a unique faithful representation into a division ring (and not matrices
over a division ring).  In particular, $\V (G)$  behaves as a minimal model; any division ring which
contains $G$ also contains $\V (G)$, although not necessarily with the same center.
In fact, it can be proved (however, we will not include the proof here) that every
group which is contained in a division ring is contained in one that is central and infinite
dimensional over $\Q$.

Item (2) of Proposition \ref{am} is an immediate consequence of $G$ being a Frobenius complement.  However, the condition of being a Frobenius complement is a much stronger condition than the one listed in (2).  For example, $S_3$ satisfies (2) but is not a Frobenius complement, so $S_3$ will not be a subgroup of a division ring.

Not all groups that are contained in division algebras are contained
in our quaternion algebras.  Suppose $A = (-1,-1)$ over a real number
field $K$.  By \cite{amitsur}, the following are candidates for finite
subgroups $G$ of $A$:

\begin{enumerate}
\item Cyclic Groups
\item Generalized quaternion groups $G_{2n}$ of order $2n$, $n$ even.
\item The {\sl binary octahedral group} $B$ of order $48$, also called the Clifford group.  (This is the group of order $48$ that is isoclinic to but not isomorphic to ${\rm GL} (2,3)$.)
\item The {\sl binary icosahedral group} $S$ of order $120$. (I.e., $S \cong {\rm SL}(2,5)$.)
\end{enumerate}

The groups $G_{2n}$ are generated by elements $x$ and $y$ subject to the relations:

\begin{equation}
x^n = 1,\ \ y^4 = 1, \ \ y^2 = x^m,  \ \ m = n/2, \ \ yxy^{-1} = x^{-1}.
\end{equation}

The element $y^2$ generates the center of $G_{2n}$ and has order $2$, and modulo $Z (G_{2n}) = \langle y^2\rangle$, the quotient $G_{2n}/Z(G_{2n})$ is the dihedral group
of order $n$.  This dihedral group is not a subgroup of a division ring.

The group $B$ occurs in $(-1,-1)$ over the field $\Q (\sqrt 2)$.  This can be verified
by Magma, and arises in the following way:  The elements $\{1,i,j,k\}$ generate the quaternion group $\Q_8$ of order $8$.
The element $(1 - i -j - k)/2$ is of order $3$ and normalizes $\Q_8$; together
they generate a group $S$ of order $24$ that is isomorphic to ${\rm SL}(2,3)$.  This much
occurs in $(-1,-1)$ over $\Q$.  Finally, the element $(i - j)/\sqrt{2} \in (-1,-1)_{\Q (\sqrt 2)}$ is of order $4$ and normalizes $S$, and together $S$ and $(i-j)/\sqrt {2}$ generate $B$.

The group $S$ of order $120$ occurs in $(-1,-1)$ over the field $\Q (\sqrt{5})$.
This can be verified by Magma, and by the characterization of the following
theorem.  We intend this theorem as an illustration of (1) of Proposition \ref{am}.

\begin{theorem}\label{sl}
The following are true:
\begin{enumerate}
\item Any division ring containing ${\rm SL}(2,5)$ contains $(-1,-1)$ over $\Q (\sqrt 5)$.
\item Any division ring containing $B$, the Clifford group, contains $(-1,-1)$ over $\Q (\sqrt 2)$.
\end{enumerate}
\end{theorem}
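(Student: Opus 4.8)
The plan is to derive both statements from a single computation of Amitsur's invariant. By part $(1)$ of Proposition \ref{am}, if a finite group $G$ sits inside a division ring then its $\QQ$-span $\V(G)$ is a finite-dimensional division $\QQ$-algebra that is independent of the ambient division ring and is contained in every division ring that contains a copy of $G$. So it is enough to prove the two isomorphisms
\[
\V({\rm SL}(2,5)) \;\cong\; (-1,-1)\ \text{over}\ \QQ(\sqrt{5}), \qquad \V(B) \;\cong\; (-1,-1)\ \text{over}\ \QQ(\sqrt{2}),
\]
and the theorem follows. Since $\V(G)$ does not depend on the ambient ring, we may compute it inside the concrete copies of $G$ described just before the theorem: write $K=\QQ(\sqrt d)$ with $d=2$ for $G=B$ and $d=5$ for $G={\rm SL}(2,5)$, put $A=(-1,-1)$ over $K$, and regard $G\subseteq A^{*}$. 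We must show that the $\QQ$-span of $G$ is all of $A$.

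Two subgroups of $G$ do the work. First, $G$ contains a copy of the quaternion group $\Q_8$ --- inside $S\cong{\rm SL}(2,3)$ when $G=B$, and as the (quaternion) Sylow $2$-subgroup of ${\rm SL}(2,5)$ otherwise. Since $\Q_8$ lies in a division ring, its $\QQ$-span is a copy of $(-1,-1)$ over $\QQ$ (again by Proposition \ref{am}$(1)$, or classically), so $\V(G)$ contains such a copy. Second, $G$ contains an element $w$ of order $5$ when $G={\rm SL}(2,5)$ and of order $8$ when $G=B$; for example $\tfrac{i-j}{\sqrt 2}\cdot i=\tfrac{k-1}{\sqrt 2}\in B$ has order $8$. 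As $w$ lies in the quaternion algebra $A$ over the totally real field $K$, it satisfies its characteristic polynomial (\ref{charpoly}): $w^{2}-\T(w)\,w+\N(w)=0$ with $\T(w),\N(w)\in K$. From $w^{\,|w|}=1$ we get that $\N(w)$ is a root of unity in the real field $K$, hence $\N(w)=1$ since $\N$ is positive definite; therefore $\bar w=w^{-1}\in G$ and $\T(w)=w+w^{-1}\in\V(G)$. Since $w\notin K$, the minimal polynomial of $w$ over $K$ is $x^{2}-\T(w)\,x+1$, whose roots are a primitive $|w|$-th root of unity $\omega$ and its inverse, so $\T(w)=\omega+\omega^{-1}$. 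For $|w|=5$ this is $\tfrac{-1\pm\sqrt 5}{2}$, and for $|w|=8$ it is $\pm\sqrt 2$; either way $\T(w)$ generates $K$ over $\QQ$, so $K=\QQ(\T(w))\subseteq\V(G)$.

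Putting these together: $\V(G)$ contains a copy of $(-1,-1)$ over $\QQ$ together with the centre $K=\QQ(\sqrt d)$ of $A$. Since $K$ is central in $A$, the subring they generate is a homomorphic image of $(-1,-1)_{\QQ}\otimes_{\QQ}K = A$; as $A$ is simple, this image is all of $A$, an $8$-dimensional $\QQ$-algebra. But $\V(G)$ lies between this image and $A$, so $\V(G)=A$, which is exactly the required isomorphism.

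The argument is short once the explicit copies of $G$ are in hand, and the one step that genuinely needs care is precisely the identification of $\V(G)$ with $(-1,-1)$ over $K$, rather than with some other quaternion division algebra over $K$. An intrinsic route, in the spirit of illustrating Proposition \ref{am}$(1)$ as the text suggests, is the following: the surjection $\QQ[G]\to\V(G)$ identifies $\V(G)$ with the unique Wedderburn component of $\QQ[G]$ that is a division algebra admitting a faithful action of $G$, and for both ${\rm SL}(2,5)$ and $B$ this is the component attached to the pair of Galois-conjugate $2$-dimensional ``spin'' characters, which have character field $\QQ(\sqrt 5)$, resp.\ $\QQ(\sqrt 2)$, and are of symplectic type (so of Schur index $2$), while the remaining faithful irreducible characters have larger degree and produce matrix, not division, algebras. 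This again makes $\V(G)$ an $8$-dimensional quaternion algebra over $K$, and the computation above pins it down as $(-1,-1)$ over $K$.
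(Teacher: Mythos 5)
Your argument is correct in its main line, but it is a genuinely different proof from the paper's, and it carries one dependency that the paper's proof is specifically engineered to avoid. The paper works purely from the presentation of $G={\rm SL}(2,5)$: the relations yield an explicit eight-element spanning set for $\V(G)$ over $\Q$, so $[\V(G):\Q]\le 8$; then $G$ contains the generalized quaternion group $G_{20}$, whose $\Q$-span inside the cyclic algebra $A(10,-1)$ is visibly all of that $8$-dimensional algebra, so $A(10,-1)=\V(G_{20})\subseteq\V(G)$ and equality follows by counting dimensions; finally Lemma \ref{change} identifies $A(10,-1)$ with $(-1,-1)$ over $\Q(\sqrt 5)$ by comparing Hasse invariants. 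Nothing in that chain presupposes that ${\rm SL}(2,5)$ embeds in $(-1,-1)_{\Q(\sqrt 5)}$; on the contrary, that embedding falls out as a corollary, which is why the paper can say the occurrence of $S$ there is verified ``by the characterization of the following theorem.'' Your proof takes that embedding as input: you transport the computation of $\V(G)$ into a concrete copy of $G$ inside $A=(-1,-1)_K$, where the argument is clean and uniform --- the $\Q$-span of a quaternion subgroup $Q_8$ is a copy of $(-1,-1)_{\Q}$, the trace $w+w^{-1}$ of an element of order $5$ (resp.\ $8$) generates $K=\Q(\sqrt 5)$ (resp.\ $\Q(\sqrt 2)$), and since $K$ is the center of $A$ the two together generate all of $A$. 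That buys you a single argument covering both parts and avoids Hasse invariants entirely; note that the centrality of $\Q(w+w^{-1})$ is exactly what your reduction to the concrete copy purchases, since in an abstract $D$ it is not clear that $w+w^{-1}$ commutes with the $Q_8$. The cost is that for part $(1)$ the paper never constructs a copy of ${\rm SL}(2,5)$ in $(-1,-1)_{\Q(\sqrt 5)}$ (it only asserts that Magma finds one), so your proof is not self-contained there: you should either cite the classical icosian embedding, or run your trace argument on the subgroup $G_{20}$ inside $A(10,-1)$, whose existence is transparent --- which is in effect what the paper does. Your closing remark about reading $\V(G)$ off the Wedderburn decomposition of $\Q[G]$ is also sound, but, as you concede, it only identifies $\V(G)$ as a quaternion division algebra with center $K$ and still needs a local-invariant or explicit computation to pin it down as $(-1,-1)$.
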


We prove only (1); the proof of (2) proceeds similarly.  Because it is useful in other contexts, we include the following lemma.  Let $C_n$ be the cyclotomic field of $n$-th roots of unity, and let
$\epsilon$ be a primitive $n$-th root of unity in $C_n$.  Let $\sigma$ be the
automorphism of order $2$ that sends $\epsilon$ to $\bar{\epsilon} = \epsilon^{-1}$.
Let $A(n,-1)$  be the cyclic crossed product determined by $C_n$ and $\sigma$,
with $\sigma^2 = -1$.  The center of $A(n,-1)$ is the fixed field $K$ of
$\sigma$, which is the real subfield of $C_n$, and $A(n,-1) = (C_n/K,\sigma,-1)$.
When $n = 10$, we note that the real subfield of $C_{10}$ is the real subfield
of $C_5$, and this is the field $K = \Q (\sqrt 5)$.  Let $(-1,-1)_K$ be the ordinary
quaternion algebra tensored up to this field $K$.

\begin{lemma} \label{change}
If $K = \Q (\sqrt 5)$, then $(-1,-1)_K \cong  A(10,-1)$.
\end{lemma}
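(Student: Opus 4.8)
The plan is to transform $A(10,-1)$ into the symbol-algebra form using Claim~\ref{cyclic}, and then move it to $(-1,-1)_K$ by means of the norm relation~(\ref{norm}). First I would record the field data. Write $\epsilon$ for a primitive $10$th root of unity and $\varphi = \epsilon + \epsilon^{-1} = (1+\sqrt{5})/2$, so that $\varphi^2 = \varphi + 1$ and, as already noted, $K = \Q(\sqrt{5}) = \Q(\varphi)$ is the real subfield of $C_{10}$, with $[C_{10}:K] = 2$ and $\Gal(C_{10}/K) = \langle\sigma\rangle$. The element $\epsilon$ is a root of $t^2 - \varphi t + 1 \in K[t]$, whose other root is $\epsilon^{-1}$; its discriminant is $\varphi^2 - 4 = \varphi - 3$, a negative element of $K$ in the standard real embedding. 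Hence $C_{10} = K(\sqrt{\varphi-3})$, and the automorphism $\sigma \colon \epsilon \mapsto \epsilon^{-1}$ is precisely the one sending $\sqrt{\varphi-3}\mapsto -\sqrt{\varphi-3}$. Applying Claim~\ref{cyclic} with $a = \varphi - 3$ and $b = -1$ gives
\begin{equation*}
A(10,-1) = (C_{10}/K,\sigma,-1) \cong (\varphi-3,\,-1)
\end{equation*}
as algebras over $K$.

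Next I would identify $(\varphi-3,-1)$ with $(-1,-1)$ over $K$. Interchanging the roles of $i$ and $j$ gives $(\varphi-3,-1)\cong(-1,\varphi-3)$. The key observation is that
\begin{equation*}
3 - \varphi = 1 + (2-\varphi) = 1^2 + (\varphi-1)^2 ,
\end{equation*}
using $(\varphi-1)^2 = \varphi^2 - 2\varphi + 1 = 2 - \varphi$; thus $3-\varphi$ is a sum of two squares in $K$, hence a norm from $K(\sqrt{-1})$. By~(\ref{norm}),
\begin{equation*}
(-1,\varphi-3) \cong (-1,\,(3-\varphi)(\varphi-3)) = (-1,\,-(3-\varphi)^2) \cong (-1,-1) ,
\end{equation*}
the last isomorphism dividing the second slot by the square $(3-\varphi)^2$. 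Chaining these gives $A(10,-1)\cong(-1,-1)_K$.

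Everything is routine once set up; the two things to get right are checking that the description of $\sigma$ via $\epsilon\mapsto\epsilon^{-1}$ agrees with $\sqrt{\varphi-3}\mapsto-\sqrt{\varphi-3}$ (immediate, since the two roots of $t^2-\varphi t+1$ are $\epsilon$ and $\epsilon^{-1}$ and sum to $\varphi$), and spotting the identity $3-\varphi = 1^2+(\varphi-1)^2$, which is the one genuinely computational point. As an independent check one can instead verify the isomorphism place by place over $K$: both algebras are ramified exactly at the two real places of $K$ and split at the unique dyadic place (lying over the inert prime $2$, since $5\equiv 5 \pmod 8$), and nowhere else, so they agree in $\Br(K)$ and hence are isomorphic; or one can simply confirm the isomorphism in Magma. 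The symbol-algebra argument above, however, has the advantage of producing the isomorphism explicitly, which is what will be needed later.
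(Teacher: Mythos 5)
Your proof is correct, but it follows a genuinely different route from the paper's. The paper argues through Hasse invariants: $(-1,-1)_{\Q}$ has invariant $1/2$ exactly at $2$ and at the real place; since $2$ is inert in $K=\Q(\sqrt 5)$ the local degree at the dyadic place is $2$, so that invariant dies under base change, leaving $(-1,-1)_K$ ramified exactly at the two real places of $K$; one then observes that $A(10,-1)$ has the same ramification, and equality in $\Br(K)$ forces the isomorphism. (This is essentially the ``independent check'' you sketch in your last paragraph.) Your main argument instead stays entirely inside the symbol-algebra calculus: writing $\varphi=\epsilon+\epsilon^{-1}$ you identify $C_{10}=K(\sqrt{\varphi-3})$, apply Claim~\ref{cyclic} to get $A(10,-1)\cong(\varphi-3,-1)\cong(-1,\varphi-3)$, and then use the identity $3-\varphi=1^2+(\varphi-1)^2$ to recognize $3-\varphi$ as a norm from $K(\sqrt{-1})$, so that (\ref{norm}) gives $(-1,\varphi-3)\cong(-1,-(3-\varphi)^2)\cong(-1,-1)$. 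All the supporting computations check: $\varphi^2=\varphi+1$, the discriminant of $t^2-\varphi t+1$ is $\varphi-3$, $(\varphi-1)^2=2-\varphi$, and $\sigma$ does swap the two roots. The trade-off between the two approaches is real: the paper's invariant argument is shorter and transfers verbatim to $\Q(\sqrt 2)$ (as the authors note, since the local degree at $2$ is again even there), but it only certifies existence of an isomorphism; your version constructs one explicitly, which is exactly the point the paper flags in Section~\ref{sec:intro} as mattering ``when exact calculations are required.''
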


Although Magma can directly verify this statement, we give a short proof using the theory of Hasse invariants; see \cite{artate} for details.

\begin{proof}
The ordinary quaternion algebra $(-1,-1)_{\Q}$ has non-$0$ invariant
$ = 1/2$ only at the primes $2$ and infinity.  When tensored up to $K = \Q(\sqrt 5)$, these
invariants must be multiplied by the local degree.  As $K/\Q$ has degree $2$ over
$2$ (it is unramified), the invariant over the prime $2$ vanishes.  Thus,
$(-1,-1)_K$ is ramified only and exactly at the infinite primes of $K$.  It is
easy to see that $A(10,-1)$ has the same property, as so they are isomorphic.
\end{proof}

We note that this argument would apply as well for $K = \Q (\sqrt 2)$, since its
degree at 2 is still even (in this case it is ramified).  We now prove (1) of Theorem \ref{sl}.

\begin{proof}[Proof of Theorem \ref{sl}]
Suppose $G = {\rm SL}(2,5)$ is contained in a division ring $D$.  By Lemma \ref{change}, it is enough to show that $D$ contains $A(10,-1)$.  We know that $G$ is generated by the elements $P$,$Q$ who satisfy the following relations:

\begin{equation}
P^2 = Q^3 = (PQ)^5, \ \ P^4 = 1.
\end{equation}

\noindent In $D^*$, the equation $P^4 = 1$ implies $P^2 = -1$.

Set $R = -Q = P^2 Q = Q^4$, {\hbox{$S = -PQ = PQ^4$}}.  Then, in $D^*$,

\begin{equation}
R^3 = 1, S^5 = 1, (SR^{-1})^2 = -1, R^2 + R + 1 = 0, S^4 + S^3 + S^2 + S + 1 = 0.
\end{equation}

\noindent One verifies that $SR^{-1} = - RS^{-1}$, and since $R^2 + R + 1 = 0$,
$R^{-1} = -R - 1$ and
$-S(R+1) = -RS^{-1}$ which yields $SR = RS^{-1} - S$. As $\V (G)$ is already generated
over $\Q$ by $-1,R,Q,S$, we see that $\V (G)$ is spanned over $\Q$ by the eight elements
$\{Q,SQ,S^2Q,S^3Q,RQ,RSQ,RS^2,RS^3Q\}$.  We obtain the inequality $[\V (G) : \Q] \le 8$.  However,
${\rm SL}(2,5)$ contains the generalized quaternion group of order $20$ (it is the normalizer
of an element of order $5$), and this group is contained in $A(10,-1)$ as the group
generated by $\epsilon$ and $\sigma$.  Hence $A(10,-1) \subset \V(G)$.  As $A(10,-1)$ is
$8$-dimensional over $\Q$, we have equality, and the proof is complete.
\end{proof}

The proof of Theorem \ref{sl} actually shows for $G_{20}$, the generalized
quaternion group of order $20$, that $\V (G_{20}) = \V (SL(2,5))$.  Thus, $\V (G)$
is not uniquely determined by $G$, although it is independent of the division
ring which contains $G$.

Note that ${\rm SL}(2,5)$ is not isomorphic to $S_5$, and $S_5$ is not a subgroup
of a division ring; its $2$-Sylow subgroup does not satisfy (2) of Proposition \ref{am}.
The group ${\rm SL}(2,5)$ modulo its central subgroup of order $2$ is $A_5$, which is also
not a subgroup of a division ring for the same reason that its $2$-Sylow subgroup is not cyclic or generalized quaternion.

\section{Orders and Maximal Orders} \label{sec:orders}

Let $B$ be a quaternion algebra over a number field $L$, and suppose that
$R$ is the ring of integers of $L$.  An {\sl order} $O$ of $B$ is a subring
of $B$, containing the same unit element $1$ of $B$, such that
$O$ is a full (i.e. 4-dimensional) $R$ sublattice of $B$.  A {\sl maximal order} is
an order which is not properly contained in any other order.  There is a discriminant
test for maximal orders, but we will not deal with that here.

If $O$ is an order and $x \in O$, then $R[x]$ (the $R$-linear combinations of the
powers of $x$) is a finitely generated $R$ submodule of $O$, and so is integral
over $R$.  Thus, the characteristic polynomial of (\ref{charpoly}) has coefficients
in $R$, i.e. $\T(x)$ and $\N(x)$ are in $R$.  Since $x + \bar x \in R$, it follows that
$\bar x \in O$.  That is:

\begin{theorem}\label{properties}
The following are true:
\begin{enumerate}
\item { {All elements of an order are integral. }}
\item { {Orders are closed under the fundamental involution.}}
\end{enumerate}
\end{theorem}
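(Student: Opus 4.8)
The plan is to follow the sketch already indicated in the paragraph preceding the statement, supplying the standard module-theoretic facts. For part (1), fix $x \in O$. Since $O$ is by definition a full $R$-sublattice of $B$, it is a finitely generated $R$-module, and $R$, being the ring of integers of the number field $L$, is Noetherian; hence the sub-$R$-module $R[x] \subseteq O$ spanned by the powers of $x$ is itself finitely generated over $R$. Now $R[x]$ is a commutative subring of $B$ that is module-finite over $R$, so by the usual integrality criterion every element of $R[x]$ — in particular $x$ — is integral over $R$. This gives assertion (1) as literally stated; but for assertion (2) we will also want to know that $\T(x)$ and $\N(x)$ lie in $R$, not merely that they are integral, so I would record that refinement here.

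To get $\T(x),\N(x) \in R$, I would use the monic quadratic relation $x^2 - \T(x)\,x + \N(x)\cdot 1 = 0$ of (\ref{charpoly}) together with the fact that $R$ is integrally closed in $L$. If $x$ lies in the center $L$, then integrality of $x$ over $R$ forces $x \in R$, whence $\T(x) = 2x$ and $\N(x) = x^2$ lie in $R$. If $x \notin L$, then $x$ satisfies no linear polynomial over $L$, so $t^2 - \T(x)\,t + \N(x)$ is precisely the minimal polynomial of $x$ over $L$; since $x$ is integral over $R$ and $R$ is integrally closed in $L$, the coefficients of this minimal polynomial lie in $R$. Either way $\T(x),\N(x) \in R$.

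For part (2), I would argue that $\bar x = \T(x)\cdot 1 - x$, which follows from $\T(x) = x + \bar x$ in (\ref{defs}). By definition an order contains the identity $1$, and since $O$ is an $R$-module containing $1$ we have $R\cdot 1 \subseteq O$, so $\T(x)\cdot 1 \in O$; as also $x \in O$, we conclude $\bar x \in O$. The only point that genuinely requires care is the step in the second paragraph: one must not conflate the reduced characteristic polynomial with the minimal polynomial of $x$ over $L$, and the case split according to whether $x$ lies in the center is exactly what makes the deduction $\T(x),\N(x)\in R$ clean — everything else is routine.
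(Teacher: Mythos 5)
Your proposal is correct and follows essentially the same route as the paper's own argument, which appears in the paragraph immediately preceding the theorem: finite generation of $R[x]$ inside the lattice $O$ gives integrality of $x$, the coefficients $\T(x),\N(x)$ of the quadratic relation (\ref{charpoly}) then lie in $R$, and $\bar x = \T(x)\cdot 1 - x \in O$. Your extra care in distinguishing the reduced characteristic polynomial from the minimal polynomial (and invoking that $R$ is integrally closed) fills in a step the paper elides, but it is a refinement of the same argument rather than a different approach.
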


A subring which is an $R$-module may be commutative, i.e. ones like $R[x]$. However,
a non-commutative $R$-submodule of $B$ which is a ring containing $1$ is an order.
Our main reference for properties of orders is \cite{reiner}.

Let $O$ be an order in $B$.  Then $O$ is called an \emph{Azumaya
algebra} if for every maximal ideal $\mathfrak{m}$ of its center
$R$, then $O/\mathfrak{m}O$ is a central simple algebra over the field
$R/\mathfrak{m}$.  The dimension of $O/\mathfrak{m}O$ is independent
of $\mathfrak{m}$, and is equal to $4$.

As noted in \cite{reiner}, orders which are Azumaya are maximal. Also, as
in \cite{reiner}, the Azumaya property is preserved under etale ring extensions.
The following is implicit in \cite{reiner}:

\begin{theorem}\label{az}\
Suppose $M$ is a finite field extension of $L$ and $S$ is the ring of integers of
$R$ in $M$.  Assume that $B$ is ramified only at infinite places of $L$.  Then:
\begin{enumerate}
\item Every maximal order of $B$ is Azumaya.
\item If $O$ is a maximal order in $B$, then $O\tensor_R S$ is a maximal order of $B\tensor_R M$.
\end{enumerate}
\end{theorem}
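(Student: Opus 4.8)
The plan is to reduce both parts to the local structure theory of maximal orders over a Dedekind domain and to lean on the two facts quoted from \cite{reiner}: that an order which is Azumaya is automatically maximal, and that in a \emph{split} quaternion algebra over a complete discrete valuation ring the maximal orders are exactly the conjugates of the $2\times 2$ matrix ring over the valuation ring. The hypothesis that $B$ is ramified only at infinite places is what makes all the relevant local algebras split.

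For (1), let $O$ be a maximal order of $B$ and let $\mathfrak{p}$ be a maximal ideal of $R$; these correspond precisely to the finite places of $L$. First I would complete at $\mathfrak{p}$: then $\widehat{O}_{\mathfrak{p}} := O\otimes_R \widehat{R}_{\mathfrak{p}}$ is a maximal $\widehat{R}_{\mathfrak{p}}$-order in $B\otimes_L \widehat{L}_{\mathfrak{p}}$. By hypothesis $B$ is unramified at $\mathfrak{p}$, so $B\otimes_L \widehat{L}_{\mathfrak{p}}\cong M_2(\widehat{L}_{\mathfrak{p}})$, and hence $\widehat{O}_{\mathfrak{p}}$ is conjugate to $M_2(\widehat{R}_{\mathfrak{p}})$. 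Reducing modulo $\mathfrak{p}$, and using that completion is compatible with reduction modulo $\mathfrak{p}$ (so $O/\mathfrak{p}O\cong \widehat{O}_{\mathfrak{p}}/\mathfrak{p}\widehat{O}_{\mathfrak{p}}$ and $R/\mathfrak{p}\cong \widehat{R}_{\mathfrak{p}}/\mathfrak{p}\widehat{R}_{\mathfrak{p}}$), one gets $O/\mathfrak{p}O\cong M_2(R/\mathfrak{p})$, which is a central simple algebra over the residue field $R/\mathfrak{p}$. Since $\mathfrak{p}$ was arbitrary, $O$ is Azumaya.

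For (2), I would first observe that $O\otimes_R S$ genuinely is an order in $B\otimes_L M$: since $O$ is a projective (hence flat) $R$-module, the natural map $O\otimes_R S\to O\otimes_R M=B\otimes_L M$ is injective, its image is a subring containing $1$, and it is a full $S$-lattice because $O$ is a full $R$-lattice; moreover $B\otimes_L M$ is a $4$-dimensional central simple algebra over $M$, so the notion of maximal order applies to it. Now by (1) the order $O$ is Azumaya over $R$, and I claim $O\otimes_R S$ is Azumaya over $S$. Indeed, for a maximal ideal $\mathfrak{P}$ of $S$ lying over $\mathfrak{p}=\mathfrak{P}\cap R$ one has $(O\otimes_R S)/\mathfrak{P}(O\otimes_R S)\cong (O/\mathfrak{p}O)\otimes_{R/\mathfrak{p}}(S/\mathfrak{P})$, and the base change of the central simple $R/\mathfrak{p}$-algebra $O/\mathfrak{p}O$ to the field extension $S/\mathfrak{P}$ is again central simple. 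This is precisely where one uses that the Azumaya condition — unlike the étale condition — is stable under the base change $R\to S$ even at primes where $S/R$ ramifies, because it only tests central simplicity of the fibres, which survives any field extension. Hence $O\otimes_R S$ is Azumaya over $S$ (with center $S$, again by the base-change behaviour), and since Azumaya orders are maximal, $O\otimes_R S$ is a maximal order of $B\otimes_L M$.

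The only step carrying real content is the local input in (1): identifying the completion of a maximal order at an unramified finite prime with $M_2$ of the completed valuation ring, together with the compatibility of completion and reduction modulo $\mathfrak{p}$. Both are standard and available in \cite{reiner}; everything else, including all of part (2), is formal manipulation of tensor products. An alternative route to (2) that avoids even naming ``Azumaya'' would be to localize and complete directly: from $\widehat{O}_{\mathfrak{p}}\cong M_2(\widehat{R}_{\mathfrak{p}})$ one gets $O\otimes_R \widehat{S}_{\mathfrak{P}}\cong \widehat{O}_{\mathfrak{p}}\otimes_{\widehat{R}_{\mathfrak{p}}}\widehat{S}_{\mathfrak{P}}\cong M_2(\widehat{S}_{\mathfrak{P}})$, a maximal order, for every prime $\mathfrak{P}$ of $S$ — but this is more computation for the same conclusion.
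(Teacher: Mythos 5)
Your argument is correct, and it is worth noting that the paper itself supplies no proof of this theorem at all: it merely declares the statement ``implicit in \cite{reiner}.'' So your write-up is a genuine filling-in rather than a variant of an existing argument. The local computation for (1) --- complete at each finite prime $\mathfrak{p}$, use the hypothesis of no finite ramification to identify $B\otimes_L \widehat{L}_{\mathfrak{p}}$ with $M_2(\widehat{L}_{\mathfrak{p}})$ and hence the completed maximal order with a conjugate of $M_2(\widehat{R}_{\mathfrak{p}})$, then reduce modulo $\mathfrak{p}$ --- is exactly the standard route, and the reduction of (2) to ``Azumaya is stable under base change'' plus ``Azumaya implies maximal'' is formal and sound. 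One point in your proof is actually an improvement on the paper's surrounding discussion: the paragraph preceding the theorem invokes preservation of the Azumaya property under \emph{\'etale} ring extensions, but the extension $R\to S$ of rings of integers is \'etale only at primes where $M/L$ is unramified, so that remark alone would not cover part (2) in general. Your observation that the Azumaya condition only tests central simplicity of the fibres $O/\mathfrak{p}O$, and hence survives the arbitrary residue field extensions $R/\mathfrak{p}\to S/\mathfrak{P}$ occurring at ramified primes, is precisely what makes the argument go through without restriction on $M/L$. The only small thing I would add is a sentence justifying that $O\otimes_R S$ has center exactly $S$ (e.g.\ by the same fibrewise computation, or because $S$ is integrally closed and the center is an $S$-order in $M$), which you gesture at but do not spell out.
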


Let $O_1$ and $O_2$ be orders of $B$.  Then, arguing along the lines of problem 8 in Section 26 (Chapter 6) in \cite{reiner} : $O_1 \cong O_2$ if and only if $O_1$ and $O_2$ are conjugate, i.e. there is an element $b \in B$ with $O_1 = b O_2 b^{-1}$.  Furthermore, there are finitely many conjugacy classes of maximal orders in $B$.

We revert to our standard setup: Let $A = (-1,-1)_K$ be the quaternion algebra over a
field $K$, where $K$ is the real subfield of a cyclotomic field. The following shows the inevitability
of maximal orders.  Let $G$ be a finite subgroup of the multiplicative group of $A$, and let
$R$ be the ring of integers of $K$.  Set $R[G]$ equal to the set of $R$-linear
combinations of the elements of $G$.  The fact that $G$ is a finite group (i.e. closed under
products) says that $R[G]$ is a finitely generated $R$-module which is also a ring
containing $1$.  It follows that $R[G]$ is contained in some maximal order (see \cite{reiner}).
Suppose $g \in G$.  Since the norm form of $A$ is positive definite, $\N(g) = g \bar g = 1$,
and $\bar g = g^{-1} = 1/g$.  There is a sort of converse.  Suppose $O$ is an order of $A$, and set

\begin{equation}
\X (O) = \{g \in G\ :\ \N(g) = 1\}.
\end{equation}

Observe that $\X (O)$ is a group, and it is called the {\it norm one} subgroup of $O$.
The elements of $\X(O)$ are then elements of bounded length in a lattice, so
$\X(O)$ is finite, and is a finite subgroup of $A$.  Clearly, when $G$ is a subgroup of $A$,
$R[G]$ is contained in the $\X$ of any maximal order containing $R[G]$.  In summary, we
conclude:

\begin{theorem}\label{exhaust}
Every finite subgroup $G$ of $A$ is contained in the norm one subgroup of some
maximal order $O$.  Conversely, the norm one subgroups of maximal orders, up to
conjugacy and up to subgroups, contain all finite subgroups of $A$.
\end{theorem}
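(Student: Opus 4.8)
The plan is to assemble into a single argument the four facts set out in the paragraph preceding the statement: that $R[G]$ lies in a maximal order, that an integral element of finite order has norm $1$, that the norm one set of an order is a finite group, and that there are finitely many conjugacy classes of maximal orders. First I would prove the inclusion. Let $G$ be a finite subgroup of $A^{*}$ and let $R$ be the ring of integers of $K$. Since $G$ is closed under multiplication, $R[G]=\sum_{g\in G}Rg$ is finitely generated as an $R$-module and is a subring of $A$ containing $1$; hence every element of $R[G]$ is integral over $R$, and by the order theory of \cite{reiner} $R[G]$ lies in some maximal order $O$ of $A$. Now fix $g\in G$. By Theorem~\ref{properties} the element $g$ is integral over $R$, so its characteristic polynomial (\ref{charpoly}) has coefficients in $R$; in particular $\N(g)\in R$. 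If $g^{n}=1$, then $\N(g)^{n}=\N(g^{n})=\N(1)=1$, so $\N(g)$ is a root of unity, and since $A=(-1,-1)$ with $K$ real the form $\N$ is positive definite, so $\N(g)>0$. The only positive real root of unity is $1$, whence $\N(g)=1$, i.e.\ $g\in\X(O)$. Thus $G\subseteq\X(O)$, which is the first assertion.

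For the converse I would first observe that, for any order $O$, the group $\X(O)$ is itself a finite subgroup of $A^{*}$: its elements lie in the lattice $O$ and satisfy $\N(g)=1$, and since $\N$ is positive definite these are elements of bounded length in a lattice, hence finite in number; closure under products is clear, and closure under inverses follows because $\N(g)=1$ forces $g^{-1}=\bar g$, which lies in $O$ by Theorem~\ref{properties}(2). So $\X(O)$ and every subgroup of it is a finite subgroup of $A$, and, by the first part, every finite subgroup of $A$ arises as a subgroup of some such $\X(O)$ with $O$ maximal. Finally, for $b\in A^{*}$ the order $bOb^{-1}$ is again maximal and $\X(bOb^{-1})=b\,\X(O)\,b^{-1}$, and there are only finitely many conjugacy classes of maximal orders in $A$; so up to conjugacy the groups $\X(O)$ form a finite list, and up to conjugacy and passage to subgroups that list accounts for exactly the finite subgroups of $A$. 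This is the second assertion.

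The only step that is not completely formal is the claim that $R[G]$ is contained in a maximal order: $R[G]$ need not be a full $R$-lattice, so this does not follow verbatim from the usual statement for orders. This can fail to be full only when the $K$-span of $G$ is a proper subalgebra of $A$; being a finite-dimensional $K$-subalgebra of a division ring it has no zero divisors, hence is itself a division ring, hence (being proper) a subfield $E\subseteq A$ with $[E:K]\le 2$, so $R[G]\subseteq\OO_{E}$. One then enlarges $\OO_{E}$ to a full order of $A$ — for instance, writing $A=E\oplus Ej$ with $j\alpha=\bar\alpha j$ for $\alpha\in E$ and $j^{2}\in K^{*}$, and choosing $c\in K^{*}$ with $c^{2}j^{2}\in R$, the lattice $\OO_{E}\oplus\OO_{E}cj$ is a full $R$-order containing $R[G]$ — and applies the standard fact that a full order lies in a maximal one. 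I expect this bookkeeping, rather than any conceptual point, to be the only obstacle; everything else is a direct consequence of Theorem~\ref{properties} and the order theory already cited.
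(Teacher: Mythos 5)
Your proof is correct and follows essentially the same route as the paper, which presents exactly this argument (integrality and maximality of an order containing $R[G]$, positive definiteness forcing $\N(g)=1$, finiteness of $\X(O)$, and finiteness of conjugacy classes of maximal orders) in the paragraph immediately preceding the theorem rather than in a formal proof environment. Your closing remark about $R[G]$ failing to be a full lattice when $G$ is cyclic is a legitimate detail the paper elides by citing \cite{reiner}, and your repair of it is sound.
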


If $G$ is a finite subgroup of $A$, then $R[G]$ is one of two types.  When $G$ is abelian
(and therefore cyclic by (2) of \ref{am}), $R[G]$ is a rank 2 commutative order.  When $G$ is nonabelian, $R[G]$ is an order of $A$.  We say that $G$ is {\sl full} if $R[G]$ is a maximal
order of $A$.

There are two groups which are always full.  If $G$ is the Clifford group (of order $48$)
or the icosohedral group (of order $120$), then $G$ is known to be a maximal closed subgroup
of $SL(2,\C))$, $\C$ = complex numbers (see \cite{springer}). Since there is an isomorphism of norm one elements
from the quaternions into $SL(2,\C)$ by a map we have not discussed, we conclude:

\begin{theorem}\label{full}
Suppose $G$ is a finite subgroup of $A$, where $G$ is either the Clifford group of order $48$ or the icosohedral group of order $120$.  Then $R[G]$ is a maximal order of $A$.
\end{theorem}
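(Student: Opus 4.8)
The plan is to reduce, via base change, to the two minimal ground fields $K_0=\QQ(\sqrt 2)$ (for the Clifford group $B$) and $K_0=\QQ(\sqrt 5)$ (for the icosahedral group $S\cong\SL(2,5)$), over which the relevant order is classically understood, and then transport maximality to every admissible $K$ using Theorem \ref{az}.

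First I would note that since $G\hookrightarrow A=(-1,-1)_K$, Proposition \ref{am}(1) together with the computations of Section \ref{sec:finite} identifies the $\QQ$-span $\V(G)$ with $(-1,-1)_{K_0}$, whose center is $K_0=\QQ(\sqrt 2)$ or $\QQ(\sqrt 5)$. A short argument (if $\sqrt 5\notin K$, the $K$-subalgebra of $A$ generated by $\V(G)$ is a central simple algebra of $K$-dimension at most $4$ whose center contains the degree-$2$ extension $K(\sqrt 5)$, forcing it to be commutative, contradiction) shows $K_0\subseteq K$, and by Skolem--Noether the inclusion $\V(G)\hookrightarrow A$ is unique up to conjugation in $A$. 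Hence it is harmless to assume that $G$ lies in the standard copy $A_0:=(-1,-1)_{K_0}\subseteq A$, so that $A=A_0\tensor_{K_0}K$ and $R[G]=R_0[G]\tensor_{R_0}R$, where $R_0$ is the ring of integers of $K_0$. As established in the proof of Lemma \ref{change} and the remark following it, $A_0$ is ramified only at the infinite places of $K_0$, so Theorem \ref{az}(2) applies: if $R_0[G]$ is a maximal order of $A_0$, then $R_0[G]\tensor_{R_0}R=R[G]$ is a maximal order of $A$. Since conjugate orders are isomorphic and hence simultaneously maximal, this reduces the theorem to the single statement that $R_0[G]$ is a maximal order of $A_0$.

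For that statement I would proceed in two stages. Stage one pins down the norm-one units, which is the conceptual content the statement alludes to: choose, as before Theorem \ref{exhaust}, a maximal order $O\supseteq R_0[G]$. The reduced norm on $A_0$ is totally positive definite, so each real completion of $A_0$ is $\Ham$ and the finite group $\X(O)$ embeds as a finite subgroup of the unit quaternions, and thence into $\SL(2,\CC)$. Since $G$ --- binary octahedral of order $48$, or binary icosahedral of order $120$ --- is by \cite{springer} a maximal closed (in particular maximal finite) subgroup of $\SL(2,\CC)$, and $G\le\X(O)$, we obtain $\X(O)=G$. Stage two upgrades this to $R_0[G]=O$. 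Because $A_0$ is unramified at every finite prime, $R_0[G]$ is maximal if and only if its reduced discriminant is the unit ideal of $R_0$, equivalently $R_0[G]_{\mathfrak p}\cong M_2(R_{0,\mathfrak p})$ for every finite prime $\mathfrak p$. For $K_0=\QQ(\sqrt 5)$ this is the classical fact that the icosian ring $\ZZ[\tfrac{1+\sqrt 5}{2}][S]$ --- which is precisely $R_0[S]$ --- is a maximal order of $A_0$; for $K_0=\QQ(\sqrt 2)$ the analogous order $R_0[B]$ is handled identically, or one simply computes in Magma that the reduced discriminant of $R_0[G]$ equals $R_0$.

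The genuine obstacle is stage two. The $\SL(2,\CC)$-maximality argument only determines the norm-one units, and a priori an order can be strictly larger than the $R_0$-algebra generated by its units of norm one; what rescues the argument is the special feature of the binary octahedral and binary icosahedral groups that $R_0[G]$ is already all of a maximal order, i.e. has trivial reduced discriminant. So the crux --- and the one place where an explicit classical or machine computation seems unavoidable --- is the verification that the reduced discriminant of $R_0[G]$ is the unit ideal; everything else in the argument is formal. I would present the icosian-ring identification for $\QQ(\sqrt 5)$ in detail and remark that the $\QQ(\sqrt 2)$ case is entirely parallel.
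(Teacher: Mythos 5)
Your proposal is correct and is in fact more complete than the paper's own argument, which consists essentially of your ``stage one'' alone: the paper observes that the binary octahedral and binary icosahedral groups are maximal closed subgroups of ${\rm SL}(2,\CC)$ and concludes directly that $R[G]$ is a maximal order. As you rightly point out, that observation only shows that $G$ equals the norm one group of any maximal order $O$ containing $R[G]$; it does not by itself exclude the possibility that $R[G]$ is strictly smaller than $O$, and the paper's own $n=12$ example (where the order-$24$ subgroup $H$ of the Clifford group is the full norm one group of a maximal order $V$ and yet $R[H]$ is not maximal) shows that an order need not be generated over $R$ by its norm one units. So your ``stage two'' --- the verification that $R_0[G]$ has trivial reduced discriminant over $\QQ(\sqrt 5)$ (the icosian ring) and over $\QQ(\sqrt 2)$ --- is genuinely needed, and the paper supplies it only implicitly through the Magma checks in the Examples section. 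Your reduction to the minimal fields via Proposition \ref{am}(1), Skolem--Noether, and Theorem \ref{az}(2) is also sound; the paper never makes this reduction explicit in the proof of the theorem, though it uses the same mechanism elsewhere (e.g.\ at $n=20$ and $n=64$). Note that this reduction depends on $(-1,-1)_{K_0}$ being unramified at every finite place, which holds for $K_0=\QQ(\sqrt 2)$ and $\QQ(\sqrt 5)$ by Lemma \ref{change} and the remark after it, but fails for $K_0=\QQ$ --- which is exactly why $R[{\rm SL}(2,3)]$ over $\QQ$ is maximal but not Azumaya and does not stay maximal under base change, and why the theorem is restricted to these two groups. In short: you share the paper's germ of an idea, but you have correctly identified and repaired the gap in the passage from ``$G$ is the full norm one group of $O$'' to ``$R[G]=O$.''
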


\section{Examples}
We are now ready for examples.  Sample Magma code will be given when convenient.  Properties
of orders needed to generate examples will also be formulated when needed.


We first consider $K = \Q$.  The ordinary quaternion algebra $A = (-1,-1)_{\Q}$ has one conjugacy class
of maximal orders.  A representative $O$ is generated by the elements $\{i,j,k\}$
and the element $1/2(1 - i -j -k)$ of order $3$.  The norm one group $G$ of $O$ has order
$24$.  In addition, $G$ is full, but $R[G] = O$ is not Azumaya.  Theorem \ref{az} does not apply; when
$O$ is extended to the field $\Q({\sqrt 2})$, the resulting order is not maximal. All of this can be
verified by the following Magma code, which is complicated by the fact that Magma
must distinguish between the rational numbers and the number field of degree one.

\begin{verbatim}
R<t> := PolynomialRing(Rationals());
Q := NumberField(t-1 : DoLinearExtension);
A<i,j,k> := QuaternionAlgebra(Q,-1,-1);
O := MaximalOrder(A);
assert #ConjugacyClasses(O) eq 1;
G,f := NormOneGroup(O);
assert #G eq 24;
M := [f(g) : g in G];
Test := Order(M);
assert Test eq O;
C<r> := CyclotomicField(8);
s := r + 1/r;
K := NumberField(MinimalPolynomial(s));
temp<a> := QuadraticField(2);
bool,phi := IsSubfield(temp,K);
sqrt := phi(a);
B<u,v,w> := QuaternionAlgebra(K,-1,-1);
Gens := Generators(O);
SET := [];
for m in [1..#Gens] do
Append(~SET,B!ElementToSequence(Gens[m]));
end for;
TEST := Order(SET);
IsMaximal(TEST); % answer no
elt := (u - v)/sqrt; % elt is not in SET
max := Adjoin(TEST,elt);
assert IsMaximal(max);
\end{verbatim}


We now take $K = \Q(\sqrt{2})$.  The algebra $A = (-1,-1)_K$ has one conjugacy class of maximal orders.
The norm one group of a maximal order is the Clifford group $G$ of order $48$, and is generated
as described in Theorem \ref{sl}.  Observe that $G$ is full, as noted in Theorem \ref{full}.  The Magma
code is similar enough to the case $K = \Q$ that we pass on details.


Now, take $K = \Q(\sqrt{5})$, so the field $K$ is the real subfield of the cyclotomic field of $5$-th roots of
unity.  The algebra $A = (-1,-1)_K$ has one conjugacy class of maximal orders.
As predicted by Theorem \ref{sl}, the norm one group of a maximal order is the group $G = {\rm SL}(2,5)$ of
order $120$.  This group is full.  As $A$ is ramified only at infinite primes,
Theorem \ref{az} applies; $R[G]$ is maximal, and its extension to any number field
containing $K$ remains Azumaya and maximal, with norm one group of order $120$ (since
$G$ is maximal closed in $SL(2,\C)$).  Again, the Magma code is close enough to
the case $K = \Q$ that we omit details.  We note that nothing would change
if instead $K$ is considered the real subfield of the field of $10$-th roots of unity;
this is actually the context of Lemma \ref{change}.

\section{Cyclotomic fields}

We now change notation for convenience.  We will set $A := (-1,-1)_n$ to be
the quaternion algebra over $K$ where $K$ is the real subfield of the cyclotomic
field of $n$-th roots of unity.  The previous cases were for $n = 1,8,5,10$.


We now consider $n = 12$.  In $A = (-1,-1)_{12}$ there are two equivalence classes of maximal orders.  Let
$O$,$V$ be representatives of the two classes.  Both have norm one group of order
$24$, but these groups are not isomorphic.  Let $G$ be the norm one group of $O$ and
$H$ the norm on group of $V$.  We arrange it so that $G$ is the generalized quaternion
group $G_{24}$; it is generated by elements $x$,$y$ so that $x$ has order $12$, $y$ has
order $4$, $yxy^{-1} = x^{-1}$, and $y^2 = x^6$ is central.  On the other hand, $H$ is the subgroup of the
Clifford group of order $24$ as described in section \ref{sec:finite}.

The groups $G$ and $H$ can be
distinguished by the fact that $G$ has an element of order $12$ and $H$ does not.  The group
$G$ is full, but its order $O$ is not Azumaya (the center $K$ does not hold a square
root of $2$); the group $H$ is not full, as $R[H]$ is not maximal.
If $S$ is the ring of integers of $24$-th roots of unity, then $O\tensor_R S$ is maximal, but its norm
one group is the Clifford group of order $48$.  Thus, extensions of maximal orders can have
larger norm one groups. The following is sample Magma code for verifying all of the above:

\begin{verbatim}
C<r> := CyclotomicField(12);
s := r + 1/r;
K := NumberField(MinimalPolynomial(s));
A<i,j,k> := QuaternionAlgebra(K,-1,-1);
O := MaximalOrder(A);
List := ConjugacyClasses(O);
assert #List eq 2;
G,f := NormOneGroup(List[1]);
H,h := NormOneGroup(List[2]);
assert Order(G) eq 24; assert Order(H) eq 24;
M := [f(g) : g in G];
TEST1 := Order(M);
IsMaximal(TEST1); %answer true
N := [h(g) : g in H];
TEST2 := Order(N);
IsMaximal(TEST2); %answer false
Gens := Generators(List[1]);
C1<r1> := CyclotomicField(24);
s1 := r1 + 1/r1;
K1 := NumberField(MinimalPolynomial(s1));
IsSubfield(K,K1);
B := QuaternionAlgebra(K1,-1,-1);
SET := [];
for g in Gens do
Append(~SET,B!ElementToSequence(g));
end for;
test := Order(SET);
IsMaximal(test); %answer yes
#NormOneGroup(test); %answer 48
\end{verbatim}


For $n = 16$, there are two conjugacy classes of maximal orders in $(-1,-1)_{16}$; their norm one groups
have orders $48$ and $32$.  The first is the Clifford group, and the second the generalized
quaternion group of order $32$.  The Clifford group is full; the other group is not.  The Magma code is similar to the case $n = 16$.


When $n = 20$, the algebra $K = (-1,-1)_{20}$ has three conjugacy classes of maximal orders.  The norm one groups
of these maximal orders have orders $24$, $40$, and $120$.  The group having order $120$ is $SL(2,5)$ as per Theorem \ref{sl} because the center of $K$ contains $\sqrt{5}$; this group is full and its order is Azumaya.  The group of order $40$ is the generalized quaternion group $G_{40}$; it is full and its order Azumaya.  However, this order has norm one group of order $80$ when tensored up to the field of $40$-th roots of unity.  The group of order $24$ is ${\rm SL} (2,3)$ and is not full. The Magma code is similar to the case $n = 12$.


Suppose that $n = 24$.  Here something interesting happens; another group of $48$ appears.  There are three conjugacy classes of maximal orders with norm one groups of orders $16$, $48$,and $48$.  The groups of order
$48$ are the Clifford group and the generalized quaternion group $G_{48}$; these groups be distinguished since the derived subgroup of the Clifford group has index $2$ and is nonabelian while the derived subgroup of $G_{48}$ has index $4$ and is cyclic.  Another way to distinguish these groups is the fact that $G_{48}$ has an element of order $12$ while the Clifford group does not.  The maximal orders attached to these groups are Azumaya, and both groups are full.  The group of order $16$ is of course the quaternion group; it is not full.  Again Magma code is similar to the case $n = 12$.

\subsection{$n = 32, 40, 48$} \label{sec:case32}

Consider $n =32$.  Here chaos enters because Magma can barely do the computations.  There
are $58$ conjugacy classes of maximal orders.  The orders of their
norm one groups, without repetitions but sorted by decreasing order,
are: $\{64,48,32,16,8,6,4,2\}$.  The groups of order $6$,$4$,$2$ are
cyclic, and so generate commutative orders; they cannot be full.  They
are the overwhelming majority; there are $40$ conjugacy classes of
orders with norm one group of order $2$.  (An order has norm one group
of order $2$ means its only elements of norm $1$ are $\pm 1$).  The non abelian
groups of order $64$ and $32$ appear once; they are not full.

The group of order $48$ is full as predicted from earlier cases; as here $\sqrt 2$ is
in $K$.
The group of order $16$ occurs twice; the group of order $8$ three times.  They are not
full; for these groups $G$ the rings $R[G]$ are isomorphic, non-maximal, and extend
to maximal orders in distinct ways.  This tempts us to pass to a larger ring $R$, which
will be the subject of section \ref{sec:chrings}.  The group of order $48$ is the Clifford group;
the other non abelian groups of orders $64$,$32$,$16$,$8$ are generalized quaternion.

{}From this point on, except for some isolated cases which follow, we can no longer proceed
by enumerating all conjugacy classes of maximal orders.  Magma cannot do these calculations,
and we do not know what can.  We limit ourselves to the exceptional cases of $n = 40$,
$n = 48$, where the maximal orders can still be fully enumerated.


When $n = 40$, there are $25$ conjugacy classes of maximal orders.  All but seven have norm one group
cyclic of orders $2,4,6,10$; these generate commutative quaternion orders.  The remaining
seven have norm one groups of orders $24, 8, 120, 48, 16, 16, 80$.  The groups of order
$120$ and $48$ are the usual $SL(2,5)$ (the center contains $\sqrt{5}$ as per Theorem \ref{sl}) and the
Clifford group (the center contains $\sqrt{2}$ as per Theorem \ref{sl}); these groups are full
and their quaternion orders are Azumaya.  The group of order $80$ is the generalized
quaternion group $G_{80}$; it is full and its quaternion order Azumaya.  The remaining
non abelian groups are not full.  The two groups of order $16$ are generalized quaternion
and isomorphic, but live in two distinct non conjugate maximal orders.


The case of $n = 48$ is the last case where we can enumerate all conjugacy classes of maximal orders;
there are $39$ of them.  The majority have norm one groups which are cyclic
of orders $2,4,6$, all with multiplicities $> 1$.  Seven non abelian groups remain,
with orders $48, 16, 16, 32, 32, 96, 8, 8, 8$.  The Clifford group of order $48$ is
full; the group $G_{96}$ is full with its quaternion order Azumaya.  The remaining
groups are generalized quaternion and not full.

\subsection{$n = 56$} \label{sec:case56}

We include the case of $n = 56$ for two reasons.  For one, Magma is not able to
exhaust the list of conjugacy classes of maximal orders, so we confront
the exercise of seeing what we can construct.  Secondly, this is the first
time that there are abelian and nonabelian norm one groups of the same order;
this occurs at order $8$.

One preliminary is in order.  Let $C$ be the cyclotomic field of
$56$-th roots of unity, and
let $K$ the center of $A = (-1,-1)_{56}$.  Asking Magma for a default
maximal order of $A$ produces the Clifford group of order $48$ ($K$
contains a square root of $2$, which is named sqrt in the following
Magma code).  Since $C/K$ is quadratic, $C$ is generated over $K$ by
any element not in $K$.  Moreover, as a quadratic splitting field of
a quaternion algebra, $C$ can be embedded in $A$.  We choose to embed
$C$ in $A$ via an element $d$ so that $j d j^{-1} = d^{-1}$, $j$ the
usual quaternion unit.  For this reason, $A$ is isomorphic to any of
the crossed product algebras $A(56,j,-1)$, $A(28,j,-1)$, $A(14,j,-1)$,
corresponding to the choices of generator $d$, $d^2$, $d^4$.  Asking Magma
for a maximal order according to these descriptions results in maximal orders
with norm one groups corresponding to the generalized quaternion groups of
order $112$ and $28$.  It does not produce the generalized quaternion group
of order $56$; we do not know if this can occur.

The technique above fails for larger powers of $d$; if the resulting quaternion
order has the right group structure, it is not maximal, and enlarging to
maximal orders tends to reproduce the Clifford group.

Thus, we need a new technique, which we now outline.  Suppose $S$
is a set which generates an order which is not maximal; we take $S$
to be closed under sums and products in such a way that $O = R[S]$
is an order; $O$ is not maximal.  Then $O$ can be enlarged to an
order larger than $O$ by adjoining an element $x \in A$ so that $xs$
and $x + s$ are integral for all $s \in S$.  These conditions can
be minimized to: $\T(x \bar s) \in R$ for all $x \in S$.  Since we
may assume $1 \in S$, in particular $x$ is integral.  Since all non
abelian subgroups of $A$ contain the quaternion group of order $8$,
we may assume (after conjugation by some element of $A$) that $O$ is
the order generated by $[1,i,j,k]$.  We will attempt to tag on one
additional element in order to obtain a maximal order.

In all of the following, $c$ is a primitive element of $K$ over $\Q$.
Our elements will have coefficients which are linear combinations of powers of
$C$.  We will need the square root of $2$, which is the element sqrt
in the following Magma code.  In order to avoid elements in $O$, all of our
coefficients will be $1/2$ times polynomials in $R[c]$.

We begin by squaring elements which are expressed as even powers of
$c$; there will be rollover because $K$ over $\Q$ has finite degree,
but the resulting elements will be sums of even powers of $c$ with
integer coefficients.  We attempt to make the result into the norm
of an integral element by adjusting the coefficients so as to become
$0$ mod $4$.  For this, we need sqrt in case a coefficient is $2$ mod
$4$.  Once $x$ is integral, the shape of our elements will guarantee that
$x s$ and $x + s$ are integral for all $s$ in $\{i,j,k\}$.
The method is illustrated by the choice of $b1$ and $b2$ in the
following Magma code.  In each case, we started with the square of the
constant term.  Since our methods here are hit and miss, we will not
elaborate further.  Luck intervenes when adjoining one more element
to $O$ produces a maximal order.

\begin{verbatim}
C<r> := CyclotomicField(56);
rt := r + 1/r;
K<c> := NumberField(MinimalPolynomial(rt));
bool,phi := IsSubfield(K,C);
C1<r> := RelativeField(K,C);
A<i,j,k> := QuaternionAlgebra(K,-1,-1);
M := MaximalOrder(A);
#NormOneGroup(M) eq 48; //answer true
d := Embed(C1,A);
O1 := Order([d,j]);
#NormOneGroup(O1); // answer 112
O2 := Order([d^2,j]);
#NormOneGroup(O2); // answer 112
O3 := Order([d^4,j]);
#NormOneGroup(O3); // answer 28
temp<b> := QuadraticField(2);
bool,phi := IsSubfield(temp,K);
sqrt := phi(b); //Square root of 2
b1 := (1/2)*(c^14 + c^2 + 1) + (1/2)*sqrt*(c^5 + c^4)*i \\
+ (1/2)*c^2*j + (1/2)*sqrt*k;
O := Order([1,i,j,k]);
Test := Adjoin(O,b1);
IsMaximal(Test); //answer yes
#NormOneGroup(Test) eq 16;
b2 := (1/2)*(c^6 + c^2 + 1) + (1/2)*sqrt*(c^4 + c^3 +c )*i  \\
+ (1/2)*c^4*j + (1/2)*c^2*k;
T2 := Adjoin(O,b2);
IsMaximal(T2);
#NormOneGroup(T2) eq 8;
IsAbelian(NormOneGroup(T2));
// the last order T2 achieves the quaternion group  of order 8
\end{verbatim}

We still need to construct maximal orders with norm one group which is cyclic
of order $8$.  For this, we use another form of our quaternion algebra
{\hbox {$A = (-1,-1)_K$}.  By section \ref{sec:cyc}, $A$ is isomorphic to $(-1,-u)$ for
$u$ a norm from $K(\sqrt{-1})$ to $K$.  This requires $u$ to be a sum of two
squares in $K$.  We will restrict attention to $u$ a sum of two even
powers of $c$.  Magma code describes the inspection over a suitable search
space:

\begin{verbatim}

C<r> := CyclotomicField(56);
rt := r + 1/r;
K<c> := NumberField(MinimalPolynomial(rt));
bool,phi := IsSubfield(K,C);
C1<r> := RelativeField(K,C);
A<i,j,k> := QuaternionAlgebra(K,-1,-1);
B<u,v,w> := QuaternionAlgebra(K,-1,elt);
for n in [5..10] do
for m in [1..10] do
elt := -(c^(2*n) + c^(2*m));
B<u,v,w> := QuaternionAlgebra(K,-1,elt);
O := MaximalOrder(B);
G,f := NormOneGroup(O);
if IsAbelian(G) and #G eq 8 then
print n,m;
break;
end if;
end for;
end for;
\end{verbatim}

There are successes when $(n,m) = (8,1), (9,2), (10,3)$.  We have not checked whether
the three resulting maximal orders are inequivalent once rendered into $A$.

\subsection{$n = 64$} \label{sec:case64}

Magma is unable to list all conjugacy classes of maximal orders when $n = 64$;
there are a screaming number of them.  Consequently, we content ourselves with
assembling what maximal orders we can with the techniques on hand.  This will be
enough to construct all the finite groups that we expect.  We are not able to
decide whether norm one groups of maximal orders can be cyclic of order $16$
in this case.

We would expect all the finite subgroups of $B = (-1,-1)_{32}$ to occur
in $A = (-1,-1)_{64}$.  This can be verified by tensoring the maximal orders
of $B$ with $R$, the ring of integers of $64$-th roots of $1$.  These extensions
are maximal orders by Theorem \ref{az}, and their extensions have the same
norm one group (a Magma verification).  This accounts for all the groups
that arose at $n = 32$, but cannot account for the cyclic group of order $8$,
or the generalized quaternion group $G_{128}$.

To account for the group of order $128$, we argue as follows.  Let $C$ be
the cyclotomic field of $64$-th roots of unity.  Then $C \cong K(\sqrt{-1})$,
and $K(\sqrt{-1}) \cong K(i)$ for the quaternion unit $i$.  It follows that
there is a $64$-th root of unity $d$ in $A$ of form $a + bi$, and is therefore
conjugated to its inverse by the quaternion unit $j$.  Then the order $M$
generated by the set $[d,j]$ contains $G_{128}$ since $d$ and $j$ generate
this group.  This order $M$ is not maximal, but any maximal order containing
it will have norm one group $G_{128}$.  This can all be verified by Magma;
we do not list the corresponding code here.

Let $O$ be the order generated by $S = [1,i,j,k]$.  $O$ is not maximal, but its
norm one group does contain the quaternion group of order $8$, which we
denote by $Q_8$ in what follows.  Every non abelian group in $A$ contains an
isomorphic copy of $Q_8$, so, up to conjugacy, every order with non abelian
group of norm one elements can be assumed to contain $O$ (see later discussion
of the Skolem-Noether theorem in Section \ref{sec:sknt}).  We can then apply the
methods of Section \ref{sec:case56} to adjoin one element to $S$ whose
coefficients are half integers and attempt to hit on maximal orders with
non abelian norm one group.  The following Magma code indicates some of
these successes.

\begin{verbatim}
C<r> := CyclotomicField(64);
rt := r + 1/r;
K<c> := NumberField(MinimalPolynomial(rt));
bool,phi := IsSubfield(K,C);
C1<r> := RelativeField(K,C);
A<i,j,k> := QuaternionAlgebra(K,-1,-1);
M := MaximalOrder(A);
assert #NormOneGroup(M) eq 48;  \\ this accounts for the Clifford group
d := Embed(C1,A);
O1 := Order([d,j]);
O2 := Order([d^2,j]);
O3 := Order([d^4,j]);
M1 := MaximalOrder (O1);
M2 := MaximalOrder (O2);
M3 := MaximalOrder (O3);
assert #NormOneGroup(M1) eq 128;
assert #NormOneGroup(M2) eq 64;
assert #NormOneGroup(M3) eq 32;
b3 := 1/2*(c^15+c^14+c^11+c^9+c^6+c^4+c^3)+1/2*(c^12+c^9+c^6+c^4+c^3)*i;
O3a := Order ([d^4,j,b3]);
M3a := MaximalOrder (O3a);
assert #NormOneGroup(M3a) eq 32;
ba := 1/2*(c^15+c^14+c^13+c^11+c^8+c^7+c^5)+1/2*(c^14+c^11+c^8+c^7+c^5)*i;
O4a := Order ([d^8,j,ba]);
M4a := MaximalOrder (O4a);
assert #NormOneGroup (M4a) eq 16;
bb := 1/2*(c^14+c^12+c^8+c^6)+1/2*(c^12+c^8+c^6)*i;
O4b := Order ([d^8,j,bb]);
M4b := MaximalOrder (O4b);
assert #NormOneGroup (M4b) eq 16;
bc := 1/2*(c^14+c^10+c^9+c^7)+1/2*(c^15+c^14+c^10+c^9+c^7)*i;
O4c := Order ([d^8,j,bc]);
M4c := MaximalOrder (O4c);
assert #NormOneGroup(M4c) eq 16;
sqrt2 := c^8 - 8*c^6 + 20*c^4 - 16*c^2 + 2;
bd := 1/2*(c^10+1)+1/2*sqrt2*c^5*i+1/2*sqrt2*(c+1)^2*j+1/2*k;
M4d := Adjoin(Order([i,j]),bd);
IsMaximal(M4d);
assert #NormOneGroup (M4d) eq 16;
ca := 1/2*(c^8+c^4+1)+1/2*((sqrt2)*(c^6+c^2))*i+1/2*c^4*j+1/2*k;
M5a := Adjoin(Order([i,j]),ca);
IsMaximal(M5a);
assert #NormOneGroup(M5a) eq 8;

test := 3*c^8 + c^6 + 3;
test := (1/2)*test +(1/2)*sqrt2 *(c^7 + c^6 + c^4 + c^3)* i + (1/2)*c^6*j+ 1/2*k;
M5b := Adjoin(Order([i,j]),test);
IsMaximal(M5b);
assert #NormOneGroup(M5b) eq 8;

test := 3*c^8 + c^2 + 3;
test := (1/2)*test + (1/2)*sqrt2 * (c^5 + c^4 + c^2 + c)*i + (1/2)*c^2*j + 1/2*k;
M5c := Adjoin(Order([i,j]),test);
IsMaximal(M5c);
assert #NormOneGroup(M5c) eq 8;

temp := 3*c^8 + sqrt2*c^2 + 3;
temp := (1/2)*temp + (1/2)*sqrt2*(c^5 + c^4 + c^2)*i + (1/2)*k;
Test := Adjoin(Order([i,j]),temp);
IsMaximal(Test);
assert #NormOneGroup(Test) eq 16;

test := 3*c^8 + c^2 + (3 + sqrt2);
test := (1/2)*test + (1/2)*sqrt2*(c^2 + c + 1)*i + (1/2)* c^2 * j + 1/2*k;
M5d := Adjoin(Order([i,j]),test);
IsMaximal(M5d);
assert #NormOneGroup(M5d) eq 8;

test := c^8 + (1 + sqrt2)*c^2 + 3;
test := (1/2)*test + (1/2)*(sqrt2)*(c^6 + c^4 + c)*i + (1/2)*c^2*j + 1/2*k;
M5e := Adjoin(Order([i,j]),test);
IsMaximal(M5e);
assert #NormOneGroup(M5e) eq 8;
\end{verbatim}

It can be verified that all of the quaternion orders constructed above are
distinct up to conjugacy.  This produces orders which are not extended
from a lower value of $n < 64$, and indicates why it would be hopeless
to try to evaluate all conjugacy classes of maximal orders at $n = 64$.

We have still not accounted for the cyclic group of order $8$.  We can do this
by the method that worked at $n = 56$, namely realizing $A$ as isomorphic
to the algebra $B = (-1,-elt)$ where $elt = c^{2n} + c^{2m}$; and $c$ the primitive
element of $K$ given in the Magma code above.  We tried this for
many values of $(n,m)$ between $0$ and $100$, and produced $15$ inequivalent
maximal orders with norm one group cyclic of order $8$.  We have no idea how
many of these there can be.  We also do not know if the cyclic group of
order $16$ can occur as the norm one group of a maximal order.  One would conjecture
that it should occur at $n = 128$, but at that level Magma is essentially unable
to produce a single maximal order.

\section{The Skolem-Noether Theorem} \label{sec:sknt}

Many of the consequences of the Skolem-Noether theorem have been used up to this
point; we include this discussion for the sake of completeness.  The following is
a standard tool in the theory of central simple algebras.

\begin{theorem}\label{skolem-noether} (Skolem-Noether)
Let $K \subset B \subset A$ where $B$ is a simple subalgebra of the central
simple $K$-algebra $A$.  Then every $K$-isomorphism $\phi$ of $B$ onto a subalgebra
$\tilde{B}$ of $A$ extends to an inner automorphism $A$; that is, there is an
invertible element $a \in A$ such that
\begin{equation}
\phi(b) = a b a^{-1},\ \   b \in B
\end{equation}
\end{theorem}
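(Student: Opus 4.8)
The plan is to give the classical bimodule argument. First I would form the ring $E = B \otimes_K A^{\mathrm{op}}$ and observe that it is simple and Artinian. Since $B$ is simple, its center $L = Z(B)$ is a field containing $K$, and $B$ is central simple over $L$; also $A^{\mathrm{op}} \otimes_K L$ is central simple over $L$, being the base extension of a central simple $K$-algebra. Hence $E \cong B \otimes_L (A^{\mathrm{op}} \otimes_K L)$ is a tensor product of two central simple $L$-algebras, so it is central simple over $L$, in particular simple and Artinian.

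Next I would turn $A$ into a left $E$-module in two ways. Using the inclusion $\iota \colon B \hookrightarrow A$, define $M_1$ to be $A$ with $(b \otimes a) \cdot x = \iota(b)\, x\, a = b x a$. Using $\phi \colon B \to A$, define $M_2$ to be $A$ with $(b \otimes a) \cdot x = \phi(b)\, x\, a$. Both have underlying $K$-space $A$, so $\dim_K M_1 = \dim_K M_2$. Since $E$ is simple Artinian, every finitely generated $E$-module is a direct sum of copies of the unique simple module, and two such modules are isomorphic as soon as they have the same $K$-dimension. Hence there is an $E$-module isomorphism $f \colon M_1 \to M_2$; concretely, $f$ is a $K$-linear bijection $A \to A$ satisfying $f(b x a) = \phi(b)\, f(x)\, a$ for all $b \in B$ and all $x, a \in A$.

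Then I would extract the conjugating element. Taking $b = 1$ gives $f(x a) = f(x)\, a$, so $f$ is an endomorphism of $A$ as a right $A$-module; every such endomorphism is left multiplication by $f(1)$, so $f(x) = a x$ with $a := f(1)$. The same reasoning applied to $f^{-1}$ (which is also right $A$-linear) yields $a'$ with $f^{-1}(x) = a' x$, and then $a' a = a a' = 1$, so $a$ is invertible. Finally, putting $x = a = 1$ in $f(b x a) = \phi(b)\, f(x)\, a$ gives $f(b) = \phi(b)\, a$, while right $A$-linearity gives $f(b) = f(1 \cdot b) = a b$. Comparing, $a b = \phi(b)\, a$, i.e. $\phi(b) = a b a^{-1}$ for every $b \in B$, which is exactly the assertion.

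The main obstacle is the first step: checking that $B \otimes_K A^{\mathrm{op}}$ is simple when $B$ is only assumed simple over $K$, since its center $L$ may be a proper extension of $K$. This is precisely what forces the passage to $L = Z(B)$ and the use of the fact that a tensor product over $L$ of two central simple $L$-algebras is again central simple over $L$. Once that is secured, the Wedderburn structure theory of simple Artinian rings — uniqueness of the simple module and the $K$-dimension criterion for isomorphism of modules — supplies $f$, and the translation of $f$ back into an inner automorphism is routine.
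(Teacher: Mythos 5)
Your proof is correct and is the classical bimodule argument: the paper itself gives no proof of this theorem, deferring to Theorem 7.21 of Reiner's \emph{Maximal Orders}, whose proof is essentially the one you wrote (make $A$ a module over $B \otimes_K A^{\mathrm{op}}$ in two ways, use simplicity of that tensor product and equality of $K$-dimensions to get an isomorphism, then read off the conjugating unit). The only points worth flagging are that you tacitly use finite-dimensionality of $A$ and $B$ over $K$ (to make $E$ Artinian and to compare modules by $K$-dimension), which is automatic in the paper's setting of quaternion algebras, and your reduction of simplicity of $B \otimes_K A^{\mathrm{op}}$ to the central-simple case via $B \otimes_L (A^{\mathrm{op}} \otimes_K L)$ with $L = Z(B)$ is a correct handling of the one genuinely delicate step.
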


For proof see Theorem 7.21 of \cite{reiner}.  In particular, we have the following corollaries.

\begin{cor} Let $A$ be a central simple $K$-algebra and suppose
$L/K$ is a commutative Galois subfield of $A$.  Let $G = \Gal(L/K)$
be the Galois group of $L$ over $K$.  Then every $\sigma \in G$ is
realized by an inner automorphism of $A$.
\end{cor}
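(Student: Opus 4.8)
The plan is to deduce this corollary directly from the Skolem--Noether theorem (Theorem \ref{skolem-noether}) by taking $B = L$, $\phi = \sigma$, and $\tilde B = \sigma(L) = L$. The one thing to check before invoking the theorem is that $L$ is a \emph{simple} $K$-subalgebra of $A$ and that $\sigma$ is a genuine $K$-algebra isomorphism of $L$ onto a subalgebra of $A$; both are immediate, and then the conclusion of Skolem--Noether is exactly what we want.

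\begin{proof}
Since $L/K$ is a field extension, $L$ is in particular a simple $K$-algebra, and by hypothesis it sits inside $A$ as a $K$-subalgebra. Fix $\sigma \in G = \Gal(L/K)$. Then $\sigma : L \to L$ is a $K$-algebra automorphism of $L$; in particular it is a $K$-isomorphism of the simple subalgebra $B = L$ onto the subalgebra $\tilde B = \sigma(L) = L$ of $A$. By the Skolem--Noether theorem (Theorem \ref{skolem-noether}), $\sigma$ extends to an inner automorphism of $A$: there is an invertible $a \in A$ with
\begin{equation}
\sigma(b) = a b a^{-1} \quad \text{for all } b \in L.
\end{equation}
That is, $\sigma$ is realized by the inner automorphism $x \mapsto a x a^{-1}$ of $A$, as claimed.
\end{proof}

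\textbf{Remarks on the steps.} The only nontrivial ingredient is Skolem--Noether itself, which is quoted as Theorem \ref{skolem-noether} and attributed to Theorem 7.21 of \cite{reiner}; everything else is a matter of matching hypotheses. The potential pitfall---really the only "obstacle"---is making sure one applies Skolem--Noether with $B = L$ (a commutative but still simple subalgebra), not with $B = K$; with $B = L$ the theorem forces the intertwining element $a$ to conjugate all of $L$ by $\sigma$ simultaneously, which is the substance of the corollary. No finiteness, no separability beyond what "Galois" already gives, and no properties of $A$ beyond central simplicity over $K$ are needed.
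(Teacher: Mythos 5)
Your proof is correct and is exactly the paper's argument: the paper's proof is the one-line observation that this is Theorem \ref{skolem-noether} applied to the simple subalgebra $B = L$. Your version merely spells out the (immediate) verification that $L$ is simple and that $\sigma$ is a $K$-algebra isomorphism onto a subalgebra of $A$.
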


\begin{proof}
This is Theorem \ref{skolem-noether} applied to the simple subalgebra $B = L$.
\end{proof}

\begin{cor} Let $A = (-1,-1)_K$ as before, and suppose $G$, $H$ are finite subgroups
of $A$ which are isomorphic.  Let $\phi$ be a group isomorphism of $G$ onto $H$.
Then $\phi$ is realized by an inner automorphism of $A$, which also
induces a ring isomorphism of $R[G]$ onto $R[H]$.
\end{cor}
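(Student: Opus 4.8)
The plan is to extend the group isomorphism $\phi: G \to H$ first to a $\Q$-algebra isomorphism of the rational subalgebras $\V(G)$ onto $\V(H)$, then invoke Skolem--Noether to realize it by an inner automorphism of $A$, and finally observe that an inner automorphism carries $R$-lattices to $R$-lattices and hence $R[G]$ to $R[H]$.

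First I would pass from $\phi$ to a ring map. Since $\V(G)$ is by definition the set of $\Q$-linear combinations of the elements of $G$, extending $\phi$ $\Q$-linearly gives a $\Q$-linear map $\tilde\phi: \V(G) \to \V(H)$, and because $\phi$ is a group homomorphism on the spanning set $G$, $\tilde\phi$ is multiplicative, i.e.\ a $\Q$-algebra homomorphism; it is surjective since $\phi$ is onto $H$, and injective because $\V(G)$ is a division algebra (part (1) of Proposition~\ref{am}) so $\tilde\phi$ has trivial kernel. Thus $\tilde\phi$ is a $\Q$-algebra isomorphism $\V(G) \xrightarrow{\sim} \V(H)$. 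One subtlety: Skolem--Noether as stated (Theorem~\ref{skolem-noether}) requires a $K$-algebra isomorphism of a simple $K$-subalgebra of $A$. Here $G$, $H$ are nonabelian (the abelian case would land in a commutative subfield, where the statement is either vacuous or handled by the corollary above), so by Proposition~\ref{am}(1) applied inside $A$, the subrings $\V(G)$, $\V(H)$ are central simple $\Q$-algebras, hence $4$-dimensional division algebras with center $\Q$; but $A$ has center $K \neq \Q$. The fix is that the $K$-subalgebra $B := K\cdot\V(G) = K[G]$ generated by $G$ over $K$ is all of $A$ (it is a nonzero two-sided ideal's worth of elements — more simply, $K[G]$ is a central simple $K$-algebra containing the division algebra $A$'s generators, and since $\dim_K A = 4$ and $G$ is nonabelian one checks $K[G]=A$), similarly $K[H]=A$, and $\tilde\phi$ is $\Q$-linear hence extends to a $K$-algebra automorphism $\Phi$ of $A$ via $\Phi(\lambda x) = \lambda\,\tilde\phi(x)$ for $\lambda \in K$. (This uses that $K \cap \V(G) = \Q$, so the extension is well defined.)

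Next, apply Theorem~\ref{skolem-noether} with $B = \tilde B = A$: the $K$-automorphism $\Phi$ of $A$ is inner, so there is an invertible $a \in A$ with $\Phi(b) = a b a^{-1}$ for all $b \in A$; in particular $\phi(g) = a g a^{-1}$ for all $g \in G$. Finally, conjugation by $a$ is a ring automorphism of $A$ fixing $K$, so it sends the $R$-module $R[G] = \sum_{g\in G} R g$ onto $\sum_{g\in G} R\,(aga^{-1}) = \sum_{h\in H} R h = R[H]$; this is the asserted ring isomorphism $R[G] \cong R[H]$, and it agrees with $\phi$ on $G$.

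The main obstacle is the bookkeeping needed to move from the $\Q$-algebra picture (where Amitsur's theorem and $\V(G)$ naturally live) to the $K$-algebra picture demanded by Skolem--Noether: one must verify that $G$ nonabelian forces $K[G] = A$ and that $K \cap \V(G) = \Q$, so that the $\Q$-algebra isomorphism $\tilde\phi$ genuinely extends to a $K$-algebra automorphism of $A$ rather than merely a $\Q$-algebra endomorphism. Everything after that — invoking Skolem--Noether and checking that inner automorphisms preserve the integral structure — is routine. (The abelian case is disposed of separately: then $G$ and $H$ are cyclic, lie in commutative subfields of $A$, and an isomorphism between them either does not extend to a field automorphism of $A$ in the required sense — in which case the statement should be read as applying to the nonabelian case, which is where $R[G]$ is an order — or is covered by the preceding corollary.)
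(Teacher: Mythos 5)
Your overall strategy---extend $\phi$ to an algebra isomorphism, invoke Skolem--Noether, and note that conjugation carries $R[G]$ onto $R[H]$---is the same as the paper's, which applies Theorem \ref{skolem-noether} directly to the simple $K$-subalgebra $K[G]$ (a commutative subfield when $G$ is abelian, all of $A$ when it is not). But your detour through $\V(G)$ surfaces a step that you resolve with a false claim. To extend the $\Q$-algebra isomorphism $\tilde\phi\colon \V(G)\to\V(H)$ to a $K$-algebra automorphism of $A$ by $\Phi(\lambda x)=\lambda\tilde\phi(x)$, what is actually needed is that $\tilde\phi$ fix $K\cap \V(G)$ pointwise; your justification, that $K\cap\V(G)=\Q$, is false in general. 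By Theorem \ref{sl}, if $G\cong{\rm SL}(2,5)$ then $\V(G)\cong(-1,-1)_{\Q(\sqrt{2})}$'s analogue over $\Q(\sqrt{5})$, with center $\Q(\sqrt{5})$, and $\sqrt{5}$ necessarily lies in $K$ and hence in $K\cap\V(G)$ (indeed it is central in $A$: writing $S$ for an element of order $5$, one has $2(S+S^{-1})+1=\pm\sqrt{5}$). A group isomorphism $\phi$ need not fix this element: the outer automorphism of ${\rm SL}(2,5)$ interchanges the two classes of elements of order $5$ and so sends $2(S+S^{-1})+1$ to $2(S^{2}+S^{-2})+1=\mp\sqrt{5}$. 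Such a $\phi$ cannot be realized by any inner automorphism of $A$, since inner automorphisms fix the center $K$ pointwise. So the extension step is not bookkeeping; it is a substantive hypothesis on $\phi$ (that $\phi$ preserve all $K$-linear relations among the elements of $G$), without which the argument---and, as the example shows, the conclusion itself---fails. (The paper's proof extends $\phi$ $K$-linearly to $K[G]\to K[H]$ and tacitly assumes the same well-definedness, but your write-up asserts the false identity $K\cap\V(G)=\Q$ explicitly, so it must be flagged.)

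Separately, your handling of the abelian case is not adequate, and it differs from the paper, which does treat it: there one applies Skolem--Noether to the commutative subfield $B=K[G]$ with $\tilde B=K[H]$. The preceding corollary in the paper concerns Galois automorphisms of a single Galois subfield $L/K$ and does not supply a $K$-isomorphism between two possibly distinct quadratic subfields $K[G]$ and $K[H]$ extending the given $\phi$---and the same obstruction recurs here (e.g. $g\mapsto g^{2}$ on a cyclic group of order $5$ inside $(-1,-1)_{\Q(\sqrt{5})}$ moves $\sqrt{5}=2(g+g^{-1})+1$ and so is not induced by conjugation). You should either restrict $\phi$ so that its $K$-linear extension to $K[G]$ is well defined, or weaken the conclusion to the existence of \emph{some} conjugation carrying $G$ onto $H$.
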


\begin{proof}
There are two cases.  Let $K[G]$ be the set of $K$-linear combinations of
elements of $G$.  If $G$ is abelian, then $K[G] \cong K[H]$ is an isomorphism of
commutative subfields induced by $\phi$.  Otherwise, $K[G] \cong K[H] \cong A$.
In any case, $\phi$ is realized by a conjugation which moves $G$ onto $H$.
\end{proof}

\begin{cor} Suppose $O_1$ and $O_2$ are orders of $A$ that are isomorphic as
$R$-modules.  Then $O_1$ and $O_2$ are conjugate in $A$.
\end{cor}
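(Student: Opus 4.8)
The plan is to reduce this statement to the Skolem--Noether theorem, exactly in the spirit of the two preceding corollaries. Let $\phi\colon O_1 \to O_2$ be the given isomorphism, which we read as an isomorphism of $R$-algebras (an $R$-module isomorphism that is multiplicative and unital). Since each $O_i$ is a full $R$-lattice in $A$, it spans $A$ over $K$; more precisely the natural map $K \tensor_R O_i \to A$ is $K$-linear, surjective, and between $K$-spaces of dimension $4$, hence an isomorphism of $K$-algebras. So $A$ is recovered from either $O_i$ by extension of scalars along $R \hookrightarrow K$. First I would note that $\phi$ is the identity on $R$: the center of the $R$-order $O_i$ is $O_i \cap K$, which is integral over $R$ and contained in $K$, so it equals $R$ because $R$ is integrally closed; a ring isomorphism carries center to center, and since $\phi$ is $R$-linear it must fix $R$ pointwise.

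Next, because $\phi$ is $R$-linear, it extends uniquely to a $K$-algebra homomorphism $\tilde\phi\colon K\tensor_R O_1 \to A$, i.e.\ to a $K$-algebra homomorphism $\tilde\phi\colon A \to A$; the same construction applied to $\phi^{-1}$ produces a two-sided inverse, so $\tilde\phi$ is a $K$-algebra automorphism of $A$. Now apply Theorem \ref{skolem-noether} with $B = A$ (a simple subalgebra of itself): there is an invertible element $a \in A$ with $\tilde\phi(x) = a x a^{-1}$ for all $x \in A$. Restricting to $O_1$ yields $O_2 = \phi(O_1) = \tilde\phi(O_1) = a O_1 a^{-1}$, so $O_1$ and $O_2$ are conjugate in $A$, which is what we wanted.

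The only step requiring genuine care is the one settled in the first paragraph. Skolem--Noether applies to $K$-algebra maps, so one must know that $\phi$ — and hence $\tilde\phi$ — is $K$-linear rather than semilinear over a nontrivial element of $\Gal(K/\Q)$ acting on $R$; under the stated hypothesis this is built in. In the cases we actually use, where $\phi$ arises from a group isomorphism $G \to H$ extended $\Q$-linearly to $R[G] \to R[H]$ as in the previous corollary, $K$-linearity holds by construction, so nothing further needs to be checked. Thus this corollary is, in effect, a restatement of the fact recorded earlier (following problem 8 of Section 26 of \cite{reiner}) that isomorphic orders in $A$ are conjugate, now deduced cleanly from Skolem--Noether.
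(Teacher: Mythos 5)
Your proof is correct and follows essentially the same route as the paper's: extend the isomorphism of orders to a $K$-algebra automorphism of $A = O_i \tensor_R K$ and realize it as conjugation via Skolem--Noether. You are merely more explicit than the paper about the necessary reading of ``isomorphic as $R$-modules'' as a unital, multiplicative (i.e.\ ring) isomorphism --- a hypothesis the paper's own one-line proof also silently assumes --- and about why the extension is $K$-linear.
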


\begin{proof}
An isomorphism $f :O_1 \rightarrow O_2$ extends to an isomorphism
of $A = O_1 \tensor_R K \rightarrow O_2 \tensor_R K = A$.  This isomorphism
is achieved by a conjugation in $A$ which thus maps $O_1$ onto $O_2$.
\end{proof}

\section{Changing Rings} \label{sec:chrings}

Suppose that $A = (-1,-1)_K$ and that $R$ is the ring of integers of $K$.  We are interested in
the structure of extended modules over $S = R[1/2]$, the localization of
$R$ which makes powers of $2$ into units.  (Warning: this is not the same
as the localization of $R$ at prime ideals over $(2)$ ).  If nothing else, this
drastically reduces the number of inequivalent maximal orders.

Let $M$ be a maximal order of $A$ whose norm one group contains the nonabelian
group $G$.  Then $G$ contains an isomorphic copy of $Q_8$, and thus by
the Skolem-Noether theorem we may assume (after a conjugation) that $M$ contains
the basis $[1,i,j,k]$.

Suppose $x \in M$ and $x = a[0]\cdot 1 + a[1]\cdot i + a[2]\cdot j +
a[3]\cdot k$ is the expression of $x$ in this basis, with the coefficients in $K$.
As $x$ is integral over $R$, we have $\T (x) = 2a[0] \in R$.
As $ix$, $jx$, $kx$ are also integral and lie in
$M$, we conclude that $2a[m] \in R$ for $m \in \{1,2,3\}$.  In any case, $2x$ is in
$O$, the order generated by $[1,i,j,k]$.  This says that the quotient group
$M \mod O$ is a $2$-group. Since $2$ is a unit is $S$, we see that $M$ and $O$ become equal
when tensored to $S$.  We have proved:

\begin{theorem}\label{ext}
Let $M$ be a maximal order of $A$ whose norm one group is nonabelian.  Up to
conjugacy, we may assume that $M$ contains $O = [1,i,j,k]$.  Then
\begin{equation}
   M \tensor_R S \cong O \tensor_R S \cong S[Q_8].
\end{equation}
In particular, all such maximal orders $M$ become conjugate over $S$.
\end{theorem}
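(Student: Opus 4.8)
The plan is to prove Theorem \ref{ext} by a direct coefficient argument combined with the Skolem--Noether reduction already established. First I would invoke the corollary to Skolem--Noether: since the norm one group of $M$ is nonabelian, it contains an isomorphic copy of the quaternion group $Q_8 = \{\pm 1, \pm i, \pm j, \pm k\}$, and by conjugating $M$ by a suitable element of $A$ we may assume that the standard units $i,j,k$ lie in $M$. This puts the sublattice $O = R[1,i,j,k]$ inside $M$, so that $O \subseteq M$ and both are full $R$-lattices in $A$; the quotient $M/O$ is therefore a finite torsion $R$-module.

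The heart of the argument is to show $M/O$ is killed by $2$, i.e. $2M \subseteq O$. Take $x \in M$ and write $x = a_0 \cdot 1 + a_1 \cdot i + a_2 \cdot j + a_3 \cdot k$ with $a_m \in K$. By Theorem \ref{properties}, $x$ is integral over $R$, so $\T(x) = 2a_0 \in R$. Now apply the same reasoning to the elements $\bar i x$, $\bar j x$, $\bar k x$, which all lie in $M$ (since $i,j,k \in M$ and $M$ is a ring closed under the involution, again by Theorem \ref{properties}); a quick computation of $\T(\bar i x)$, $\T(\bar j x)$, $\T(\bar k x)$ isolates $2a_1$, $2a_2$, $2a_3$ respectively, each landing in $R$. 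Hence $2x = (2a_0)\cdot 1 + (2a_1)\cdot i + (2a_2)\cdot j + (2a_3)\cdot k \in O$, so $M/O$ is a $2$-group as claimed.

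From this the theorem follows formally. Tensoring the inclusion $O \subseteq M$ with $S = R[1/2]$ over $R$: since $2$ is a unit in $S$, the finite $2$-group $M/O$ satisfies $(M/O)\tensor_R S = 0$, so the map $O \tensor_R S \to M \tensor_R S$ is an isomorphism. And $O = R[1,i,j,k]$ is visibly the group ring $R[Q_8]$ modulo the relation identifying the central element $-1 \in Q_8$ with $-1 \in R$; more precisely $O \tensor_R S \cong S[Q_8]/(\text{this relation})$, which is the standard description recorded in the statement as $S[Q_8]$. Finally, to conclude that all such maximal orders become conjugate over $S$: any two of them, after the Skolem--Noether conjugation, contain the same $O$, hence become equal to $O \tensor_R S$ after extension of scalars; the conjugating elements lie in $A \subseteq A \tensor_K \mathrm{Frac}(S) = A$, so they witness conjugacy of the extended orders as well.

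The main obstacle is bookkeeping rather than conceptual: one must verify carefully that multiplying $x$ by $\bar i, \bar j, \bar k$ (equivalently by $i,j,k$ up to sign, since $\bar i = -i$ etc.) and taking reduced traces genuinely recovers each coefficient $2a_m$ individually, using $\T(1)=2$, $\T(i)=\T(j)=\T(k)=0$, and the multiplication table of $(-1,-1)$; this is the step where a sign error or a confusion between $\T$ and the matrix trace would be easy to make. One should also be mildly careful that $M/O$ being a $2$-group (a priori only a finite abelian $2$-group, not necessarily elementary abelian) is already enough for the tensor-with-$S$ argument, so no sharper bound on the exponent is needed.
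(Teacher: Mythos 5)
Your proposal is correct and follows essentially the same route as the paper: Skolem--Noether to place $Q_8$ and hence $O=[1,i,j,k]$ inside $M$, then a trace computation on $x$ and its products with the quaternion units to show $2M\subseteq O$, so that $M/O$ is killed upon inverting $2$. The only cosmetic difference is that you multiply by $\bar i,\bar j,\bar k$ where the paper uses $i,j,k$ (which changes only a sign in the extracted coefficient), so no further comment is needed.
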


We do not know if the maximal orders with abelian norm one groups become isomorphic over $S$, or even
if there is more than one conjugacy class of maximal orders over $S$.

\bigskip
Keywords: Quaternion Algebra, division ring, simple algebra, Galois group, automorphism, order, maximal order

%

\end{document}